\author{No\'{e} B\'{a}rcenas }
                \email{barcenas@matmor.unam.mx}
         \urladdr{http://www.matmor.unam.mx /~ barcenas}
 \address{Centro de Ciencias Matem\'aticas. UNAM \\Ap.Postal 61-3 Xangari. Morelia, Michoac\'an MEXICO 58089}
\author{Mario Vel\'asquez}
\address{Universidad Nacional de Colombia Sede Bogot\'a Departamento de Matem\'aticas, Facultad de Ciencias Cra. 30 cll 45 - Ciudad Universitaria, Bogot\'a, Colombia} \email{mavelasquezme@unal.edu.co}  
\urladdr{https://sites.google.com/site/mavelasquezm/home}
\providecommand\@dotsep{5}
\def\listtodoname{List of Todos}
\def\listoftodos{\@starttoc{tdo}\listtodoname}
\newtheorem{theorem}{Theorem}[section]
\newtheorem{corollary}[theorem]{Corollary}
\newtheorem{lemma}[theorem]{Lemma}
\newtheorem*{theorem*}{Theorem}
  \theoremstyle{definition}
\newtheorem{condition}[theorem]{Condition}
\newcommand{\dbR}{\mathbb{R}}
\newcommand{\dbZ}{\mathbb{Z}}
\newcommand{\Z}{\mathbb{Z}}
\newcommand{\calF}{{\mathcal F}}
\newcommand{\calG}{{\mathcal G}}
\newcommand{\calH}{{\mathcal H}}
\newcommand{\calM}{{\mathcal M}}
\newcommand{\beq}{\begin{equation}}
\newcommand{\eeq}{\end{equation}}
\title{Positive Scalar Curvature and crystallographic fundamental groups.}
\keywords{Gromov-Lawson-Rosenberg Conjecture, Group homology, (connective) topological K-theory, extensions  of $\mathbb{Z}^{n}$ by $\mathbb{Z}/m$}
\subjclass[2020]{46L80, 53C27}
\begin{document}

\maketitle

\begin{abstract}
    We  examine positive  and  negative results for  the  Gromov-Lawson-Rosenberg Conjecture within  the class of  crystallographic groups.  We give necessary conditions  within  the  class  of  split extensions  of free abelian  by cyclic  groups to satisfy the unstable Gromov-Lawson-Rosenberg Conjecture. We also give necessary conditions within the  same class  of  groups which are counterexamples for the conjecture. 
\end{abstract}

\section{Introduction}

The  (unstable) Gromov-Lawson-Rosenberg Conjecture for a discrete group $\Gamma$  predicts that a  closed spin $n$-dimensional manifold $M^{n}$ where  $n\geq 5$, with fundamental group $\Gamma$ and classifying  map for  the  fundamental group  $f: M\to B\Gamma $,  the vanishing of the   group homomorphism 

$$ \alpha (M) = A\circ p_{B\Gamma} D(f_{M} ):\Omega_{n}^{\rm spin}(B\Gamma)\longrightarrow KO_{n}(C_{*}^{*}(\Gamma)) $$
given as the composition
$$ \Omega_{n}^{\rm spin}(B\Gamma)\overset{D(f_{M}) }{\longrightarrow} ko_{n}(B\Gamma)\overset{p_{B\Gamma}}{\longrightarrow} KO_{n}(B\Gamma)\overset{A}{\longrightarrow} KO_{n}(C_{r}^{*}(\Gamma)), $$
decides about  the  existence  of  a  metric  of  positive  scalar  curvature  on $M$. 
Some  explanations  are due.
$D$ is  the map  which  sends  a spin  bordism  class $f_{M}:M\to B\Gamma$   to  the  image  of  the  $ko$-fundamental  class $f_{*}([M])\in ko_{n}(B\Gamma)$. The map 
$p_{B\Gamma}: ko_{n}(B\Gamma)\to KO_{n}(B\Gamma)$ is  the  natural transformation  of  periodicity, and $A$  denotes  the real  assembly  map $A:KO_{n}(B\Gamma)\to KO_{n}(C_{r}^{*}(\Gamma)) $. 

There  exist  counterexamples  to this  conjecture. T. Schick  in  \cite{schick} showed  that for  the group $\Gamma= \mathbb{Z}^{4}\times \mathbb{Z}/3$, there  exists a five dimensional  manifold $M$ with  fundamental  group $\Gamma= \pi_{1}(M)=  \mathbb{Z}^{4}\times \mathbb{Z}/3$ for which
$$ \alpha(M)=0 \in KO_{5}(C_{r}^{*}(\Gamma)),$$
but $M$ admits no metric of  positive  scalar curvature. 

The result initiated a  series  of subsecuent  articles stating group cohomological  conditions  which  produce  counterexamples  for  the (unstable)  Gromov-Lawson-Rosenberg Conjecture, including \cite{davispearson}, and specially \cite{DSS},  where the techniques  are  used  to  construct  a torsionfree example, which is even  a  fundamental group of a compact manifold  admitting  a ${\rm CAT
}(0)$-cubical complex structure.

 We  would  like  to mention \cite{DP}, \cite{DL13} and \cite{HU} as  some  sources  for  positive  results  on the  stable  Gromov-Lawson-Rosenberg-Conjecture. The  positive results  therein  concern  groups satisfying  the  Baum Connes Isomorphism  conjecture,  which  satisfy  condition \ref{condition:NM},  and some other conditions about the maximal finite subgroups. 
 
We  will consider  in this article for a  group  homomorphism $\rho: \mathbb{Z}/m\to Gl_{n}(\mathbb{Z})$, split  extensions  of  the  type
$$ 1\longrightarrow \mathbb{Z}^{n}\longrightarrow \Gamma = \mathbb{Z}^{n}\rtimes_{\rho} \mathbb{Z}/m\longrightarrow \mathbb{Z}/m\longrightarrow 1.$$

The integral cohomology of  such  groups  $\Gamma$ has  been  computed  in  a  series  of  articles under several additional  sets  of  hypothesis including: 
\begin{itemize}
\item The group $\Gamma$ is  torsionfree and $m$ is  a  prime  number \cite{CV}. 
\item The  action is   compatible in the  sense of \cite{AGPP}, which  allows  for a  specific  resolution   of  the  trivial $\mathbb{Z}[\Gamma]$-module  $\mathbb{Z}$,  and  the  collapse  of the Lyndon-Hochschild-Serre  spectral sequence computing  the  integral cohomology of $\Gamma $  without  extension problems  at  the  $E_{2}$-term.  

\item The action of $\Gamma$ on $\mathbb{R}^{n}-\{0\}$ is  free outside of  the  origin. \cite{LL12}. 
\item The action  of  $\Gamma$ on $\mathbb{R}^{n}-\{0\}$  is  free outside  of the  origin,   and  $m$ is  a  prime  number \cite{DL13}. 

\item The natural number $m$
 is  free of  squares, without  further assumption  on the  action \cite{SV}.  
 \end{itemize}
 With  the  exception of \cite{SV} and \cite{AGPP}, these  conditions  are used  because they imply  the  following maximality properties within the family  of finite  subgroups  of $\Gamma$. 
 
\begin{condition}\label{condition:NM}
[Conditions $M$ and $NM$]
\begin{itemize}
\item  Each finite  subgroup $H$ of $\Gamma$ is a subgroup of  a unique maximal finite  subgroup  $M$, and  there  exists  a  finite  collection (up to conjugacy) $\mathcal{M}$ of  them. 
\item  The  normalizer in $\Gamma$  of $M$ is  $M$ itself. 
\end{itemize}
\end{condition}

L\"uck and Davis in \cite{DL13} used  the  results  of  these computations  together  with  the  construction  of  specific  models  for  the  classifying space for  proper  actions \cite{LW} to  derive computations of  complex, real, and  real  connective  $K$-homology of  both  the  classifying  space $B\Gamma$,  and  the  classifying  space  for  proper  actions $\underline{E}\Gamma$.

Extending  these results, the second  named author  and S\'anchez performed  computations  of both  the complex $KU$- homology  of  the  classifying  spaces $B\Gamma$,  denoted by  $KU_{*}(B\Gamma)$, and  the  equivariant $KU$-homology groups  of  the  classifying  spaces  for proper  actions,  denoted  by  $KU_{*}^{\Gamma}(\underline{E}\Gamma )$. 

In this work we  will  make  structural statements  about  the  algebraic  structure of real connective $ko$-homology  groups  of $B \Gamma$, denoted  by  $ko_{*}(B\Gamma)$, which  will be  the  base for positive and  negative results for the (unstable) Gromov-Lawson-Rosenberg conjecture for high dimensional smooth  spin manifolds  with fundamental group $\Gamma$. 

The  hypothesis  that  we  will  impose  on the  group  $\Gamma$ is  the  following

\begin{condition}\label{condition:positive}[Condition for  positive results]
Let $m$ be  and odd natural number  and  assume  that $\rho: \mathbb{Z}/m\to Gl_{n}(\mathbb{Z})$ is  a  group homomorphism such that the group action  of  $\mathbb{Z}/M$ on $\mathbb{R}^{n}-\{0\}$  is  free. 
\end{condition}

The  following  is  our  main positive  result  on the  Gromov-Lawson-Rosenberg  conjecture. 

\begin{theorem}\label{theo:global}

Let $M^n$ be an $n$-dimensional smooth spin manifold, where $n\geq 5$ is  even, and  with fundamental group isomorphic to $\Gamma$, where $\Gamma$ satisfies condition \ref{condition:positive}.  Denote  by $f_{M}: M\to B\Gamma $ the  classifying map for  the  fundamental group. Assume  that $\alpha(M) =0$. Then $M$ admits a  metric of  positive scalar curvature.
\end{theorem}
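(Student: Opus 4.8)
The plan is to reduce the Gromov--Lawson--Rosenberg statement to a purely homotopy-theoretic computation of the connective real $K$-homology of $B\Gamma$ and its behaviour under the periodicity map and the assembly map. The classical strategy, due to Stolz and used in all the positive results cited (\cite{DP}, \cite{DL13}, \cite{HU}), is that if $\Gamma$ satisfies the Baum--Connes conjecture, then the vanishing of $\alpha(M)$ already forces the image of $D(f_M)$ in $ko_n(B\Gamma)$ to lie in the subgroup generated by ``geometric'' classes coming from total spaces of $\mathbb{HP}^2$-bundles, provided one can control the kernel of the composite $ko_n(B\Gamma)\xrightarrow{p}KO_n(B\Gamma)\xrightarrow{A}KO_n(C^*_r\Gamma)$. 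Concretely, the first step is to invoke the Baum--Connes isomorphism for $\Gamma$ (which holds since $\Gamma$ acts properly cocompactly on $\mathbb{R}^n$, hence is a-T-menable, or simply because $\Gamma$ is crystallographic) to identify $KO_*(C^*_r\Gamma)$ with the equivariant real $K$-homology $KO_*^\Gamma(\underline{E}\Gamma)$, and to factor the assembly map through the restriction to finite subgroups.

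The second and central step is to analyze the map $p_{B\Gamma}\colon ko_n(B\Gamma)\to KO_n(B\Gamma)$ and the kernel of $A$ on $KO_n(B\Gamma)$, using condition \ref{condition:positive}. Because $m$ is odd and the action on $\mathbb{R}^n\setminus\{0\}$ is free, $\Gamma$ satisfies the maximality properties of condition \ref{condition:NM}: every finite subgroup lies in a unique maximal finite (in fact, cyclic of order $m$, up to conjugacy) subgroup, self-normalizing, and there are finitely many conjugacy classes. This is exactly the input that lets one run the L\"uck--Weiermann construction of $\underline{E}\Gamma$ and obtain a Mayer--Vietoris / pushout description of $ko_*(B\Gamma)$, $KO_*(B\Gamma)$ and $KO_*^\Gamma(\underline{E}\Gamma)$ in terms of the corresponding invariants of $B\mathbb{Z}^n$, $B(\mathbb{Z}^n\rtimes\mathbb{Z}/m)$ and the classifying spaces of the finite subgroups. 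The key point is that for $G$ finite of \emph{odd} order, $ko_*(BG)\to KO_*(BG)$ and the $KO$-theoretic assembly behave as well as possible: the reduced $ko$-homology of $BG$ is all torsion of odd order, the $\eta$- and $\beta$-towers are clean, and Stolz's surgery argument applies verbatim because there are no $2$-primary obstructions. So I would isolate the statement: in even degrees $n\ge 5$, every class in $\ker\big(A\circ p_{B\Gamma}\big)\subseteq ko_n(B\Gamma)$ is represented by a spin manifold admitting positive scalar curvature --- equivalently, the ``$ko$-class detects PSC'' after restricting to this family of fundamental groups.

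The third step is the reduction to the action of $\Gamma$ being free on $\mathbb{R}^n\setminus\{0\}$: this guarantees that $\underline{E}\Gamma$ can be taken to be the join/mapping-cylinder model built from $\mathbb{R}^n$ and the (homotopy) fixed data of the maximal finite subgroups, so the ``singular part'' of $\underline{E}\Gamma$ is a disjoint union of $B M_i$ with $M_i$ odd-order cyclic, sitting over $B\Gamma$ by the finite-index inclusions. Then one shows that the relative group $\widetilde{ko}_n(B\Gamma)$ splits (after the odd-order-torsion considerations) as a direct sum of a free part detected by $KO_n(C^*_r\Gamma)$ and a torsion part killed by the surgery-theoretic construction of PSC metrics; applying Stolz's theorem (a spin manifold $M^n$, $n\ge5$, carries a PSC metric iff its class dies in $ko_n(B\pi_1)$ modulo the $ko$-module generated by $\mathbb{HP}^2$-bundle classes, \emph{once the relevant group is ``good''}) finishes the argument. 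The hard part, and where the assumption that $n$ is even is genuinely used, will be the second step: showing that in even degrees the periodicity map $p_{B\Gamma}$ together with the assembly map has no ``exotic'' kernel beyond what is visibly represented by PSC manifolds --- i.e., controlling the interplay between the $E_\infty$-page of the Atiyah--Hirzebruch spectral sequences for $ko_*$ and $KO_*$ of $B\mathbb{Z}^n$ and $B\Gamma$, where the $\mathbb{Z}/m$-action on $H_*(B\mathbb{Z}^n)$ with the freeness hypothesis forces the invariants to concentrate in a controlled range. I expect this to require a careful bookkeeping of the $ko$-homology of $B\Gamma$ promised by the ``structural statements'' announced in the introduction, rather than any new geometry.
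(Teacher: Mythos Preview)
Your scaffolding is correct --- Baum--Connes for $\Gamma$, the L\"uck--Weiermann pushout coming from conditions $M$ and $NM$, the odd-order hypothesis to kill $2$-primary phenomena, and Stolz's theorem at the end --- and this is exactly the framework the paper uses. But the central step you leave vague (``split $\widetilde{ko}_n(B\Gamma)$ into a free part detected by $KO_n(C^*_r\Gamma)$ and a torsion part killed by surgery'') is not what the paper does, and your version is both harder to make precise and unnecessary.

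The paper proves something sharper: in even degrees $n$, the composite $A\circ p_{B\Gamma}\colon ko_n(B\Gamma)\to KO_n(C^*_r\Gamma)$ is \emph{injective}, so $\alpha(M)=0$ forces $D[f_M]=0$ outright, and then Proposition~12.1 of \cite{DL13} (Stolz) finishes immediately. The three structural facts that give this are:
\begin{enumerate}
\item For $N\cong\Z/p^s$ with $p$ odd, $\widetilde{ko}_{2r}(BN)=0$ (a direct Atiyah--Hirzebruch computation: the reduced homology of $B\Z/p^s$ is concentrated in odd degrees and is $p$-torsion, while $ko_j(\ast)\otimes\Z/p$ is concentrated in degrees $\equiv 0,4\pmod 8$). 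Hence in the pushout exact sequence of Theorem~\ref{diagram:main} the map $\beta\colon ko_{2r}(B\Gamma)\to ko_{2r}(\underline{B}\Gamma)$ is injective.
\item $\ker(p_{\underline{B}\Gamma})$ consists only of $m$-torsion (Lemma~\ref{periodicity}), proved by inductively passing to coinvariants and reducing to the injectivity of $ko_*(B\Z^n)\to KO_*(B\Z^n)$.
\item $ko_{2r}(B\Gamma)$ contains no $m$-torsion (Lemma~\ref{label:ko-even}). This is the genuinely delicate point, and it uses the freeness of the action on $\dbR^n\setminus\{0\}$ in an essential way: one shows that the Atiyah--Hirzebruch filtration of $ko_*(B\Z^n)$ splits as $\Z_{(2)}[\Z/p^s]$-modules (the norm element acts by zero, so one is over a Dedekind domain and the relevant quotients are projective), whence the Tate cohomology $\widehat H^{i+1}(\Z/p^s;ko_j(B\Z^n))$ vanishes for $i+j$ even and the Serre spectral sequence for $\Z^n\rtimes\Z/p^s$ collapses onto the $p$-torsion-free column $E^2_{0,2r}$.
\end{enumerate}
Combining (i)--(iii) with the commutative diagram of Theorem~\ref{diagram:main}: if $\alpha(M)=0$ then $\beta(D[f_M])\in\ker(p_{\underline{B}\Gamma})$ is $m$-torsion, hence so is $D[f_M]$ by injectivity of $\beta$, hence $D[f_M]=0$.

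Your proposal does not isolate (i) or (iii); in particular, the phrase ``torsion part killed by the surgery-theoretic construction of PSC metrics'' is where the argument would stall, since without (iii) you have no mechanism to show the putative torsion is representable by lens-space-type classes. Once you see that the kernel is actually zero, no such representability argument is needed.
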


Theorem \ref{theo:global} will be  proved   for  $n$ even  as  Theorem \ref{theo:main-par}, and $n$ odd  as Theorem \ref{theo:main-impar}. 
The main  structural  statements  for  their  proof, namely Lemmas \ref{periodicity} and  \ref{lemma:sequence-even}   for the  even case, and Lemma \ref{lemma:no-ptorsion-odd} are  of  different  nature and  therefore  stated and proved separately.

Within the class  of  crystallographic  groups
$$ 1\to \mathbb{Z}^{n}\to \Gamma= \mathbb{Z}^{n}\rtimes_{\rho}\mathbb{Z}/m \to \mathbb{Z}/m\to 1,$$
there  exist groups for  which the  unstable  Gromov-Lawson-Rosenberg  conjecture is  known to  be  true, namely 
\begin{itemize}

\item The number $m$ is  prime  and  the  action  is  free  outside  of  the origin,  according  to \cite{DL13}.  
\item  The groups  adressed  in section \ref{sec:connective}.
\end{itemize}

 On the  other  hand  side,  the  group $\mathbb{Z}^{4}\times \mathbb{Z}/3 $ belongs   to  the  family  of  central extensions  
  
$$ 1\to \mathbb{Z}^{n}\to \Gamma= \mathbb{Z}^{n}\rtimes_{\rho}\mathbb{Z}/m \to \mathbb{Z}/m\to 1,$$
for  a  representation $\rho: \mathbb{Z}/3\to  GL_{5}(\mathbb{Z})$ whose  action  on $\mathbb{R}^{5}$ is  trivial. When we localize at a prime number $p$, we have a complete determination of the group cohomology of $\Gamma$ in terms of  the  decomposition  of a finite index submodule of $\mathbb{Z}^n$ as a  $\mathbb{Z}[\mathbb{Z}/m]$-module where the summands are  the irreducible  representations $\mathbb{Z}$ (trivial representation), $\mathbb{I}[\mathbb{Z}/n]$ (augmentation ideal), and $\mathbb{Z}[\mathbb{Z}/m]$. The  information related  to  these  decompositions is  one  of  the  ingredients for  the  negative results on the  Gromov-Lawson-Rosenberg conjecture. 

The  second  ingredient  will  be  the verification, using  the computations of group cohomology  by \cite{SV},  that  there  exists a  family  of  crystallographic  groups  for  which the method introduced  by  Schick  in \cite{schick} applies. The  following is our main  negative result,  which  will be  proved as Theorem \ref{theo:counterexamples}. 
\begin{theorem*}
  Suppose $m$ is square-free. Let $\Z^n$ be a $\Z/m$-module, and suppose that there exists a prime $p\mid m$ such that if we consider the $(r,s,t)$ decomposition of $M$ viewed as a $\Z/p$-module, where $r\geq 4$, and $s+t\geq1$ then $\Z^n\rtimes\Z/m$ is a counter-example for the unstable Gromov-Lawson-Rosenberg conjecture.  
\end{theorem*}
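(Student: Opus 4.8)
The plan is to show that every such $\Gamma=\Z^n\rtimes_\rho\Z/m$ lies in the scope of Schick's construction \cite{schick}, in the form made systematic in \cite{DSS}: one produces a single $ko$-homology class certifying the failure of the conjecture. Recall that the surgery-theoretic reductions of Jung and Stolz give, for $n'\geq 5$, that $D\colon\Omega^{\mathrm{spin}}_{n'}(B\Gamma)\to ko_{n'}(B\Gamma)$ is onto, that a closed spin $M^{n'}$ with $\pi_1(M)\cong\Gamma$ admits a metric of positive scalar curvature if and only if $D([M,f_M])$ lies in the subgroup $T_{n'}(\Gamma)\subseteq ko_{n'}(B\Gamma)$ of classes represented by positive–scalar–curvature spin manifolds, and that $\alpha(M)=A\circ p_{B\Gamma}\big(D([M,f_M])\big)$. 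Hence it suffices to find, for some admissible dimension $n'$, a class $y\in ko_{n'}(B\Gamma)$ with $A\big(p_{B\Gamma}(y)\big)=0$ but $y\notin T_{n'}(\Gamma)$: choosing $[M,f_M]$ with $D([M,f_M])=y$ then realizes $y$ by a manifold with $\alpha(M)=0$ carrying no positive–scalar–curvature metric.

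The candidate $y$ will come from group homology. With $n=r+s(p-1)+tp=\dim_{\dbQ}(\Z^n\otimes\dbQ)$ we have $n\geq 5$ because $r\geq 4$ and $s+t\geq 1$, so a dimension $n'\geq 5$ of the required sort exists (one may take $n'=n$, or, as in \cite{schick}, a fixed small dimension). The key input is \cite{SV}: localized at $p$, the integral cohomology of $\Gamma$ is governed by the $(r,s,t)$-decomposition of $\Z^n|_{\Z/p}$, with the relevant Lyndon–Hochschild–Serre and Atiyah–Hirzebruch spectral sequences collapsing and the extension problems resolved. The $r\geq 4$ trivial summands contribute, through $H_*(T^r;\Z)$, a degree-$4$ exterior generator — exactly the threshold at which something nontrivial happens, reflecting $ko_4(\mathrm{pt})=\Z$ and $ko_3(\mathrm{pt})=0$ — and pairing it with the $p$-torsion in $H_*(B(\Z/p);\Z)$ and propagating up the tower yields a nonzero $p$-torsion class surviving to $y\in ko_{n'}(B\Gamma)$. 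The hypothesis $s+t\geq 1$ does double duty: it forces $n\geq 5$, and by making the $\Z/m$-action nontrivial it places us in the favorable range of \cite{SV}.

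That $A\big(p_{B\Gamma}(y)\big)=0$ should be the soft step. Since $\Gamma$ is crystallographic it is virtually $\Z^n$, hence a-T-menable, so Baum–Connes holds and $KO_{n'}(C^*_r(\Gamma))\cong KO^{\Gamma}_{n'}(\underline{E}\Gamma)$, computable by the methods of \cite{DL13}; all finite subgroups of $\Gamma$ are cyclic, so $KO_*(C^*_r(\Gamma))$ has no odd torsion. When $p$ is odd, $p_{B\Gamma}(y)$ is $p$-torsion and lands in a group with no $p$-torsion, hence vanishes. When $p=2$ one instead identifies the target of the assembly map explicitly and checks the image of $y$ dies, exactly as in \cite{schick}; if the intended hypothesis includes $p$ odd, this case does not arise.

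The main obstacle is the last step, $y\notin T_{n'}(\Gamma)$. The plan is to use the structure of $T_{n'}(\Gamma)$ as computed by Stolz — in particular its behaviour under induction from the (cyclic, hence Gromov–Lawson–Rosenberg-satisfying) finite subgroups of $\Gamma$ and under external products with positive–scalar–curvature manifolds — to reduce the relation $y\in T_{n'}(\Gamma)$ to an inclusion of subgroups of $H_*(B\Gamma;\Z_{(p)})$ that can be bounded through \cite{SV}. The degree-$4$ torus class is then precisely what obstructs that inclusion; this is the same mechanism by which $\Z^4\times\Z/3$ is a counterexample. Establishing this non-membership uniformly across the whole $(r,s,t)$-family, for all square-free $m$ and all admissible primes $p\mid m$, is the technical heart of the argument, and it is where the cohomology computations of \cite{SV} are indispensable.
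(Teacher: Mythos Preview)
Your overall strategy is right---this is indeed a Schick-type counterexample, and the class you want does arise from the $\Z^r$-summand paired with $p$-torsion---but your plan for the hard step, showing $y\notin T_{n'}(\Gamma)$, is both vaguer and more roundabout than necessary. You propose to analyse the structure of $T_{n'}(\Gamma)$ via Stolz's description (induction from finite subgroups, external products) and bound it inside $H_*(B\Gamma;\Z_{(p)})$. That is not how Schick's argument works, and it is not what the paper does. The actual obstruction is the codimension-one minimal-hypersurface reduction: if $w\in H_5(B\Gamma;\Z)$ and $a_1,a_2,a_3\in H^1(B\Gamma;\Z)$ satisfy $a_1\cap(a_2\cap(a_3\cap w))\neq 0$ in $H_2(B\Gamma;\Z)$, then any closed spin $5$-manifold realising $w$ carries no metric of positive scalar curvature (iterating Schoen--Yau three times lands on a spin $2$-manifold with PSC and nontrivial image in $H_2$, which is impossible). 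This is the criterion in \cite[Example~2.2]{schick}, and it bypasses any direct analysis of $T_{n'}$.

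The paper's proof is accordingly short and concrete, and works entirely in integral (co)homology in degree $\leq 5$. Using the collapse of the Lyndon--Hochschild--Serre spectral sequence (\cite[Cor.~4.2]{AGPP}) together with the K\"unneth formula applied to the $(r,s,t)$-decomposition, the inclusion $\iota\colon\Z^r\times\Z/m\hookrightarrow\Gamma$ induces an injection on $H_*$. One then takes $a_1,\dots,a_4$ to be four free generators of $H^1(\Z^r\times\Z/m)$ pulled back along $\iota^*$, their homology duals $\widehat{x}_i$, a $p$-torsion element $\widehat{y}\in H_1(\Z^r\times\Z/m)$, and sets $w=\iota_*(\widehat{x}_1\times\cdots\times\widehat{x}_4\times\widehat{y})\in H_5(\Gamma)$. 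Naturality of the cap product and injectivity of $\iota_*$ give $a_1\cap a_2\cap a_3\cap w\neq 0$, and Schick's argument applies verbatim. Note that \cite{SV} is used here only to justify the $(r,s,t)$-decomposition of $\Z^n$ as a $\Z/p$-module (hence the K\"unneth splitting and injectivity of $\iota_*$), not for any $ko$-level spectral sequence computation as you suggest; and the vanishing of $\alpha$ is already built into Schick's argument, so your Baum--Connes discussion, while correct, is not needed. (Minor point: your rank formula should read $n=r+sp+t(p-1)$, since $\Z[\Z/p]$ has rank $p$ and $I$ has rank $p-1$.)
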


\subsection{Aknowledgements}
The first named author aknowledges support  of DGAPA-PAPIIT Grant IN101423. Both  authors  thank the  exchange program CIC-UNAM.

\section{Connective $ko$-homology  of $B\Gamma$. }\label{sec:connective}

Recall that $\Gamma$ fits in an extension
\begin{equation}\label{extension}
1\to\Z^n\to\Gamma\xrightarrow{\pi}\Z/m\to 1,
\end{equation}
Assume that condition \ref{condition:positive} holds. 
Then, as  a  consequence  of  \cite{LW},  we  obtain the  following  result.

\begin{theorem}\label{diagram:main}
   There is a commutative diagram with exact rows    $$\xymatrix{\bigoplus_{(N)\in\mathcal{N}}\widetilde{ko}_r(BN)\ar[r]&ko_r(B\Gamma)\ar[r]^{\beta}\ar[d]^{A\circ p_{B\Gamma}}&ko_r(\underline{B}\Gamma)\ar[d]^{p_{\underline{B}\Gamma}}\\& KO_r(C_r^*(\Gamma;\dbR))\ar[r]&KO_r(\underline{B}\Gamma)}$$
  
where the bottom map is the composite of the inverse of the Baum–Connes map and the map $KO_m^\Gamma(\underline{E}\Gamma)\to KO_m(B\Gamma)$ is  the   induction map with respect to the  homomorphism $\Gamma\to 1$. 
\end{theorem}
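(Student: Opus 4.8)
The plan is to deduce the diagram from the Lück--Weiermann push-out comparing the classifying space for the trivial family with $\efin\Gamma$, applied together with the (real) Baum--Connes isomorphism for $\Gamma$, all commutativity being a formal consequence of naturality.

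\textbf{Step 1: finite-subgroup structure.} Since $\dbZ^n$ is torsion-free, every finite subgroup of $\Gamma$ maps injectively into $\dbZ/m$ under $\pi$ and the linear part of its action is faithful (Condition~\ref{condition:positive} forces $I-\rho(s)$ to be invertible for $s\neq 1$). Averaging shows every finite subgroup $F$ fixes a point of $\dbR^n$, and faithfulness of the linear part together with the free-action hypothesis shows this fixed point is unique; in particular a maximal finite subgroup $N$ equals the stabilizer of its fixed point, and $g\in N_\Gamma(N)$ forces $g$ to fix that point, so $N_\Gamma(N)=N$. Thus Condition~\ref{condition:NM} holds, and (compactness of $\dbR^n/\Gamma$, as $\dbR^n$ is a cocompact model for $\efin\Gamma$) there are only finitely many conjugacy classes of maximal finite subgroups; let $\mathcal N$ be this set.

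\textbf{Step 2: the push-out and the top row.} By \cite{LW} applied to the inclusion of the trivial family into $\fin$, and using $N_\Gamma(N)=N$ together with the fact that the classifying space of $N$ for the family of all its subgroups is a point, there is a $\Gamma$-homotopy push-out
$$\xymatrix{\coprod_{(N)\in\mathcal N}\Gamma\times_N \mathrm{E}N\ar[r]\ar[d] & \mathrm{E}\Gamma\ar[d]\\ \coprod_{(N)\in\mathcal N}\Gamma/N\ar[r] & \efin\Gamma.}$$
Arranging the left vertical map to be a $\Gamma$-cofibration and passing to $\Gamma$-orbits yields a homotopy push-out of spaces with corners $\coprod_{(N)}BN$, $B\Gamma$, $\coprod_{(N)}\mathrm{pt}$ and $\underline B\Gamma=\efin\Gamma/\Gamma$. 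Applying $ko_*$ gives a Mayer--Vietoris sequence; the map $\bigoplus_{(N)}ko_r(BN)\to\bigoplus_{(N)}ko_r(\mathrm{pt})$ is split surjective via the base-point sections, with kernel $\bigoplus_{(N)}\widetilde{ko}_r(BN)$, and a short diagram chase extracts from the Mayer--Vietoris sequence the exactness of $\bigoplus_{(N)\in\mathcal N}\widetilde{ko}_r(BN)\to ko_r(B\Gamma)\xrightarrow{\ \beta\ } ko_r(\underline B\Gamma)$, with $\beta$ induced by $B\Gamma\to\underline B\Gamma$.

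\textbf{Step 3: vertical maps, bottom row, commutativity.} Working with $ko_*$ and $KO_*$ as $\Gamma$-equivariant homology theories in the sense of Davis--Lück, identify $\beta$ with $ko_r^\Gamma(\mathrm{E}\Gamma)\to ko_r^\Gamma(\efin\Gamma)\to ko_r(\underline B\Gamma)$ using $ko_r^\Gamma(\mathrm{E}\Gamma)\cong ko_r(B\Gamma)$ and induction along $\Gamma\to 1$. Applying the periodicity transformation $p\colon ko\to KO$ and the real Baum--Connes isomorphism $KO_r^\Gamma(\efin\Gamma)\xrightarrow{\cong}KO_r(C_r^*(\Gamma;\dbR))$ (available since $\Gamma$ is virtually abelian, hence a-T-menable; cf.\ \cite{DL13}), the composite $KO_r(B\Gamma)=KO_r^\Gamma(\mathrm{E}\Gamma)\to KO_r(C_r^*(\Gamma;\dbR))$ is exactly the assembly map $A$, giving the left vertical $A\circ p_{B\Gamma}$. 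Define the bottom map as the inverse Baum--Connes map followed by induction along $\Gamma\to 1$, $KO_r(C_r^*(\Gamma;\dbR))\cong KO_r^\Gamma(\efin\Gamma)\to KO_r(\underline B\Gamma)$; it sits in the $KO$-analogue of the Mayer--Vietoris sequence of Step~2, which makes its row exact. Commutativity of the square then follows from naturality of $p$, of induction along $\Gamma\to 1$, and of the relative assembly map with respect to the $\Gamma$-map $\mathrm{E}\Gamma\to\efin\Gamma$.

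\textbf{Main obstacle.} The conceptual content is a direct application of \cite{LW}; the work lies in the equivariant bookkeeping of Step~3 — checking that induction along $\Gamma\to 1$ is available on the \emph{connective} theory $ko_*^\Gamma$, is compatible with both $p$ and the Baum--Connes map, and carries the relative assembly map for $\mathrm{E}\Gamma\to\efin\Gamma$ precisely onto the two maps drawn in the diagram. Once these naturality statements are in place, the theorem follows.
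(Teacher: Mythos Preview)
Your proposal is correct and follows essentially the same approach as the paper: the paper's proof is a one-line appeal to the cellular push-out relating $B\Gamma$ and $\underline{B}\Gamma$ under Conditions~M and~NM (i.e.\ the L\"uck--Weiermann push-out of \cite{LW}), and your Steps~1--3 simply unpack this, verifying that Condition~\ref{condition:positive} implies Conditions~M and~NM, extracting the exact top row from Mayer--Vietoris, and recording the naturality statements needed for the square. The only remark is that your ``main obstacle'' paragraph slightly overstates the difficulty: the compatibilities you list are standard properties of the Davis--L\"uck equivariant framework used throughout \cite{DL13}, so no genuine obstacle remains.
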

\begin{proof}
It is a consequence of the cellular pushout relating $B\Gamma$ and $\underline{B}\Gamma$ when the group $\Gamma$ satisfies conditions M and NM.
\end{proof}
Now suppose that $f_M:M\to B\Gamma$ is a classifying map of $M$ and  $$\alpha(M)=A\circ p_{B\Gamma}(D[f_M])=0.$$ 

First, by the commutativity of the above diagram, we have $p_{\underline{B}\Gamma}\circ\beta(D[f_M])=0$. Now we will analyze $\ker(p_{\underline{B}\Gamma}).$

 \begin{lemma}\label{periodicity}$\ker(p_{\underline{B}\Gamma})$ only contains $m$-torsion. 
\end{lemma}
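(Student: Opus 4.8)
The plan is to reduce the statement to the $n$-torus $\dbR^{n}/\dbZ^{n}$, on which the periodicity map is visibly injective, by means of a transfer argument. Under Condition~\ref{condition:positive} the affine action of the crystallographic group $\Gamma$ on $\dbR^{n}$ has the property that every finite subgroup fixes a nonempty, hence contractible, affine subspace, while every infinite subgroup contains a nontrivial translation and so acts freely; hence $\dbR^{n}$ is a model for $\underline{E}\Gamma$ and $\underline{B}\Gamma=\Gamma\backslash\dbR^{n}=(\dbZ/m)\backslash(\dbR^{n}/\dbZ^{n})$, with $\dbZ/m$ acting on the torus through $\rho$. In particular $\underline{B}\Gamma$ is a compact $n$-dimensional orbifold, so $ko_{r}(\underline{B}\Gamma)$ is finitely generated, and since the periodicity transformation $p\colon ko\to KO$ is the inclusion of the connective cover it is rationally injective; thus $\ker(p_{\underline{B}\Gamma})$ is a finite group, and it suffices to prove that $p_{\underline{B}\Gamma}$ becomes injective after inverting $m$.

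First I would record two facts. \textbf{(a)} $p_{\dbR^{n}/\dbZ^{n}}\colon ko_{*}(\dbR^{n}/\dbZ^{n})\to KO_{*}(\dbR^{n}/\dbZ^{n})$ is injective, because $\Sigma^{\infty}(\dbR^{n}/\dbZ^{n})_{+}$ splits as a finite wedge of spheres, so both groups are finite direct sums of copies of $ko_{*}(S^{0})$, resp.\ $KO_{*}(S^{0})$, and $ko_{k}(S^{0})\to KO_{k}(S^{0})$ is injective for all $k$ (an isomorphism for $k\ge0$ and zero for $k<0$). \textbf{(b)} For every homology theory $E$ there is a transfer map $\pi^{!}\colon E_{*}(\underline{B}\Gamma)[1/m]\to E_{*}(\dbR^{n}/\dbZ^{n})[1/m]$, natural in $E$ and satisfying $\pi_{*}\circ\pi^{!}=m\cdot\mathrm{id}$, where $\pi\colon\dbR^{n}/\dbZ^{n}\to\underline{B}\Gamma$ is the orbit projection. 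To build $\pi^{!}$ I would use that, although $\pi$ is not a covering, the Borel construction $Y=E(\dbZ/m)\times_{\dbZ/m}(\dbR^{n}/\dbZ^{n})$ fits into maps $\dbR^{n}/\dbZ^{n}\xleftarrow{\ \sim\ }E(\dbZ/m)\times(\dbR^{n}/\dbZ^{n})\xrightarrow{\,q\,}Y\xrightarrow{\,c\,}\underline{B}\Gamma$, where $q$ is a free $m$-fold covering and $c$ is a $\dbZ[1/m]$-homology equivalence: the point preimages under $c$ are classifying spaces of subgroups of $\dbZ/m$, which are $\dbZ[1/m]$-acyclic, and one compares Atiyah--Hirzebruch spectral sequences, which converge since $\underline{B}\Gamma$ is finite-dimensional and $Y$ has bounded $\dbZ[1/m]$-homology, using that $m$ is odd so a $\dbZ[1/m]$-homology equivalence is also a $\dbZ/2$-homology equivalence. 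Then $\pi^{!}$ is the stable covering transfer of $q$ transported along $c_{*}$ (and along $\dbR^{n}/\dbZ^{n}\simeq E(\dbZ/m)\times(\dbR^{n}/\dbZ^{n})$), while $\pi_{*}$ is identified with $q_{*}$ transported likewise, so $\pi_{*}\circ\pi^{!}=m\cdot\mathrm{id}$.

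Granting \textbf{(a)} and \textbf{(b)}, the lemma follows by a short chase: if $\bar{x}\in ko_{r}(\underline{B}\Gamma)[1/m]$ is killed by $p_{\underline{B}\Gamma}$, then by naturality $p_{\dbR^{n}/\dbZ^{n}}(\pi^{!}\bar{x})=\pi^{!}(p_{\underline{B}\Gamma}\bar{x})=0$, so $\pi^{!}\bar{x}=0$ by \textbf{(a)}, whence $m\bar{x}=\pi_{*}\pi^{!}\bar{x}=0$; as $m$ is a unit in $ko_{r}(\underline{B}\Gamma)[1/m]$ this gives $\bar{x}=0$. Therefore $\ker(p_{\underline{B}\Gamma})\otimes\dbZ[1/m]=0$, i.e.\ this (finite) kernel is an $m$-group. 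I expect the main obstacle to be fact \textbf{(b)}: making the comparison of $\underline{B}\Gamma$ with its Borel model work for the non-connective theory $KO$ and setting up the transfer compatibly with periodicity; the rest is formal. One could alternatively route the argument through $B\Gamma\simeq E(\dbZ/m)\times_{\dbZ/m}(\dbR^{n}/\dbZ^{n})$, the Mayer--Vietoris ladder underlying Theorem~\ref{diagram:main}, and the vanishing of $m\cdot\widetilde{ko}_{*}(BN)$ and $m\cdot\widetilde{KO}_{*}(BN)$ for $N\in\mathcal{N}$.
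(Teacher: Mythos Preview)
Your proof is correct and takes a genuinely different route from the paper's. The paper argues inductively over the primes dividing $m$: for each such $p$ it invokes Lemma~\ref{coinvariants} to identify, after inverting $p$, the $ko$- and $KO$-homology of $\underline{B}\Gamma$ with $\dbZ/p^{s}$-coinvariants of the same theories applied to a smaller crystallographic quotient, and iterating through all primes reduces to the injectivity of $p_{B\dbZ^{n}}$ on $\dbZ/m$-coinvariants of the torus, which follows from injectivity on coefficients. You instead invert all of $m$ in one step and manufacture a single transfer $\pi^{!}$ from $\underline{B}\Gamma$ back to $T^{n}$ via the Borel construction, then use the injectivity of $p_{T^{n}}$ directly (your fact~(a)).

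Your argument is shorter and more transparent; the only substantive point is fact~(b), and the cleanest justification is precisely the alternative you sketch at the end: the homotopy cofibre of $c\colon B\Gamma\to\underline{B}\Gamma$ is $\bigvee_{N\in\mathcal{N}}\Sigma BN$ by the pushout behind Theorem~\ref{diagram:main}, and $\widetilde{E}_{*}(BN)$ is $|N|$-torsion for any homology theory $E$ (via the Atiyah--Hirzebruch spectral sequence, since $\widetilde{H}_{*}(BN)$ is $|N|$-torsion and the homological AHSS converges for CW complexes). Hence $c$ is an $E[1/m]$-equivalence for $E=ko$ and $E=KO$ with no convergence subtleties, and the covering transfer for $q$ transports as you describe. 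The paper's prime-at-a-time approach, by contrast, dovetails with the coinvariant formalism of Lemma~\ref{p-local}, which is reused elsewhere in the paper; your global transfer buys brevity here but does not feed into those later arguments.
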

\begin{proof}
Let $p$ be a prime dividing $m$ such that $m=p^sm'$ with $(p,m')=1$ and let $r$ be the product of the primes dividing $m$. By Lemma \ref{coinvariants} the quotient map $\underline{B}\Gamma\to \underline{B}(\Gamma/\dbZ/{p^s})$  induces an isomorphism $$ko_*(\underline{B}\Gamma)_{\dbZ/{p^s}}\otimes\dbZ[1/p]\to ko_*(\underline{B}(\Gamma/\dbZ/{p^s}))\otimes\dbZ[1/p].$$
We have a commutative diagram

$$\xymatrix{ko_*(\underline{B}\Gamma)_{\Z/{p^s}}\otimes\Z[1/p]\ar[r]^-{\cong}\ar[d]^{p_{\underline{B}\Gamma}}&ko_*(\underline{B}(\Gamma/\dbZ/{p^s}))\otimes\Z[1/p]\ar[d]^{p_{\underline{B}(\Gamma/\dbZ/{p^s})}}\\ KO_*(\underline{B}\Gamma)_{\Z/p^s}\otimes\Z[1/p]\ar[r]^-{\cong}&KO_*(\underline{B}(\Gamma/\dbZ/{p^s}))\otimes\Z[1/p]}$$
Then it is enough to prove that $$ko_*(\underline{B}(\Gamma/\dbZ/{p^s}))_{\Z/p^s}\otimes\dbZ[1/p]\to KO_*(\underline{B}(\Gamma/\dbZ/{p^s}))_{\Z/p^s}\otimes\dbZ[1/p]$$ only contains $m/p^s $-torsion. 

The commutativity  of  the  previous  diagram follows  from  the  fact  that  periodicity is a transformation  of  homology  theories, and  that the  composition  of a  homology  theory  with the  formation  of  coinvariants  is  a  homology  theory. 

Proceeding inductively over all primes dividing $m$ it is enough to prove that $$ko_*(B\dbZ^n)_{\Z/m}\otimes\Z[1/m]\xrightarrow{p_{B\Z^n}} KO_*(B\dbZ^n)_{\Z/m}\otimes\Z[1/m]$$ is injective. But it is a consequence of the injectivity of the maps on coefficients from connective real K-theory and periodic real K-theory as in noted by Davis and L\"uck in the proof of Thm. 0.7 in \cite{DL13}. Then we find that $\ker(p_{\underline{B}\Gamma})$ only contains $m$-torsion.
\end{proof}
Now we need to recall a couple of results that we will need in the following.
\begin{lemma}\label{coinvariants}
Let $p$ be a prime number and let $G$ be a $p$-group.  For any $G$-CW complex $X$  with quotient map $\pi:X\to G\setminus X$ and any homology theory $\calH_*(-)$, the induced map $\pi_r:\left(\calH_r(X)\otimes\Z[1/p]\right)_G\to \calH_r(G\setminus X)\otimes\Z[1/p]$ is an isomorphism for all $r\in \Z$.
\end{lemma}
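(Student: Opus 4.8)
The plan is to exhibit both sides as homology theories on the category of $G$-CW pairs and to deduce the desired isomorphism from the standard comparison theorem for homology theories, once that isomorphism has been checked on a single orbit. So I would first fix the prime $p$ and the $p$-group $G$ and record the algebraic fact that drives everything: the functor $M\mapsto (M\otimes_{\Z}\Z[1/p])_G = M\otimes_{\Z[G]}\Z[1/p]$ from $\Z[G]$-modules to $\Z[1/p]$-modules is \emph{exact}. The reason is that $|G|$ is a power of $p$, hence invertible in $\Z[1/p]$, so the averaging element $e=|G|^{-1}\sum_{g\in G}g\in\Z[1/p][G]$ is an idempotent and $e\cdot\Z[1/p][G]\cong\Z[1/p]$ is a direct summand of the free module $\Z[1/p][G]$; thus the trivial module $\Z[1/p]$ is projective, hence flat, over $\Z[1/p][G]$, and composing with the exact localization $-\otimes_{\Z}\Z[1/p]$ gives the claimed exactness. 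I expect this to be the heart of the matter: it is precisely what fails integrally, and it is exactly what the hypotheses ``$G$ a $p$-group'' and ``invert $p$'' are for (note that no induction on $|G|$ is needed).

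Granting this, I would verify that $(X,A)\mapsto h_r(X,A):=(\calH_r(X,A)\otimes\Z[1/p])_G$ is a homology theory on $G$-CW pairs: homotopy invariance and excision are immediate since $(-\otimes\Z[1/p])_G$ is a functor preserving isomorphisms, the long exact sequence of a pair survives because that functor is exact, and the disjoint-union axiom survives because it is additive (if $\calH_*$ itself is not assumed additive one simply restricts to finite $G$-CW complexes throughout). Dually, since $G\setminus(-)$ carries $G$-CW pairs to CW pairs and commutes with the pushouts and coproducts building them up, $(X,A)\mapsto k_r(X,A):=\calH_r(G\setminus X,G\setminus A)\otimes\Z[1/p]$ is also a homology theory on $G$-CW pairs. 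Because $\pi\colon X\to G\setminus X$ is constant on $G$-orbits, the map $\calH_r(X)\otimes\Z[1/p]\to\calH_r(G\setminus X)\otimes\Z[1/p]$ is $G$-invariant (with $G$ acting trivially on the target) and therefore factors through the coinvariants, yielding a natural transformation $\pi_r\colon h_r\to k_r$ compatible with boundary maps. The comparison theorem — a five-lemma induction up the skeleta, with a colimit argument to pass to the infinite-dimensional case — then reduces the statement to showing that $\pi_r$ is an isomorphism on every orbit $G/H$, $H\le G$, in every degree.

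Finally I would compute on orbits. Here $\calH_r(G/H)=\bigoplus_{gH\in G/H}\calH_r(\mathrm{pt})$, with $G$ permuting the summands through its action on $G/H$; as a $\Z[G]$-module this is $\Z[G]\otimes_{\Z[H]}\calH_r(\mathrm{pt})$ with $H$ acting trivially on $\calH_r(\mathrm{pt})$. Taking coinvariants gives $(\calH_r(G/H)\otimes\Z[1/p])_G\cong(\calH_r(\mathrm{pt})\otimes\Z[1/p])_H=\calH_r(\mathrm{pt})\otimes\Z[1/p]$, and unwinding the definitions shows that under this identification $\pi_r$ is simply the identity of $\calH_r(\mathrm{pt})\otimes\Z[1/p]$, being induced by the fold map $G/H\to\mathrm{pt}$. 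Hence $\pi_r$ is an isomorphism on all orbits, and the comparison theorem finishes the proof. Beyond the flatness input above, the only slightly delicate bookkeeping is the colimit step required to pass from finite to arbitrary $G$-CW complexes, which is routine once $\calH_*$ is assumed to satisfy the disjoint-union axiom.
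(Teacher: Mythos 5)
Your proposal matches the paper's argument: both exhibit $X\mapsto\left(\calH_r(X)\otimes\Z[1/p]\right)_G$ and $X\mapsto\calH_r(G\setminus X)\otimes\Z[1/p]$ as $G$-homology theories, check the natural transformation on orbits $G/H$, and conclude by the comparison theorem. Your write-up is simply more explicit about why the left side is a homology theory (exactness of $G$-coinvariants after inverting $p$, because the norm idempotent makes $\Z[1/p]$ projective over $\Z[1/p][G]$) and about the orbit computation, where the paper merely points to the analogous Prop.~A.4 in \cite{DL13}.
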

\begin{proof}
    The argument is similar to the one given for Prop. A.4. in \cite{DL13}. Given a $G$-CW-complex $X$, we get a natural transformation 
    $$j_*:\left(\calH_m(X)\otimes\Z[1/p]\right)_G\to \calH_m(G\setminus X)\otimes\Z[1/p].$$Both sides are $G$-homology theories and moreover $j_*$ is an isomorphism when $X=G/H$ we have that $j_*$ is an isomorphism for ever $X$.
\end{proof}

We aso need the following result of L\"uck-Weiermann. 
     \begin{theorem}[Corollary 2.8 in \cite{LW}]\label{LW}
     Let $\calF\subseteq \calG$ be families of subgroups of a group $\Gamma$ such that every element in $\calG-\calF$ is contained in a unique maximal element in $\calG-\calF$. Let $\calM$ be a complete system of representatives of the conjugacy classes
of subgroups in $\calG-\calF$ which are maximal in $\calG-\calF$. Let $\mathcal{SUB}(M)$ be the family of subgroups of $M$. Then, there is a cellular $\Gamma$-pushout
\begin{equation}\label{pushout}
\xymatrix{\bigsqcup_{M\in\calM} B_{\calF\cap N_\Gamma M} (N_\Gamma M)\ar[r]^-i\ar[d]^{\lambda}&B_{\calF}(\Gamma)\ar[d]\\
\bigsqcup_{M\in\calM} B_{\mathcal{SUB}(M)\cup(\calF\cap N_\Gamma M)} (N_\Gamma M)\ar[r]&X
}
\end{equation}
such that $X$ is a model for $B_{\calG}(\Gamma)$.
 \end{theorem}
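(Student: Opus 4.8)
The statement is Corollary~2.8 of \cite{LW}, and the plan is to recall how it is obtained by specializing the general L\"uck--Weiermann construction (Theorem~2.3 of \emph{loc.\ cit.}). First I would introduce the equivalence relation $\sim$ on $\calG\setminus\calF$ by declaring $H\sim K$ if and only if $H$ and $K$ are contained in a common (equivalently, the same) maximal element of $\calG\setminus\calF$; the hypothesis that every element of $\calG\setminus\calF$ lies in a \emph{unique} maximal element of $\calG\setminus\calF$ is exactly what makes this well defined. Then I would verify the two axioms the construction requires: if $H\subseteq K$ in $\calG\setminus\calF$ then the maximal element over $K$ also lies over $H$, hence equals the unique maximal element over $H$, so $H\sim K$; and conjugation by $g$ carries the maximal element over $H$ to the maximal element over $g^{-1}Hg$, so $\sim$ is $\Gamma$-invariant. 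With these in hand, for $M\in\calM$ the group $\{g\in\Gamma\mid g^{-1}Mg\sim M\}$ equals $N_\Gamma M$ (two maximal elements that are $\sim$-equivalent must coincide), the associated family on $N_\Gamma M$ is $\mathcal{SUB}(M)\cup(\calF\cap N_\Gamma M)$, and the $\Gamma$-conjugacy classes of $\sim$-classes are in bijection with $\calM$.

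Next I would construct $X$ as the $\Gamma$-pushout of the displayed square, each summand on the left column being regarded as a $\Gamma$-space induced up from $N_\Gamma M$. The left vertical map $\lambda$ is the equivariant map classifying the inclusion of families $\calF\cap N_\Gamma M\subseteq \mathcal{SUB}(M)\cup(\calF\cap N_\Gamma M)$; it is unique up to $N_\Gamma M$-homotopy and may therefore be taken to be a cellular $\Gamma$-inclusion. The top map $i$ exists because $\mathrm{res}_{N_\Gamma M}B_\calF(\Gamma)$ is a model for $B_{\calF\cap N_\Gamma M}(N_\Gamma M)$, so the desired $\Gamma$-map out of the induced space is produced by the induction--restriction adjunction together with uniqueness up to homotopy of classifying spaces for families. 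Since $\lambda$ is a cellular inclusion, $X$ is a $\Gamma$-CW-complex and the square is a homotopy pushout; it then remains to check that $X$ is a model for $B_\calG(\Gamma)$, i.e.\ that $X^H\simeq\ast$ when $H\in\calG$ and $X^H=\emptyset$ when $H\notin\calG$.

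For the fixed-point computation I would use that $(-)^H$ commutes with pushouts along cellular $\Gamma$-inclusions, so $X^H$ is the pushout of the $H$-fixed sets of the three corners, and split into three cases. If $H\notin\calG$, the two left-hand corners have empty $H$-fixed sets (they only see subgroups lying, after conjugation, in $\calG$) and $B_\calF(\Gamma)^H=\emptyset$, so $X^H=\emptyset$. If $H\in\calF$, then $B_\calF(\Gamma)^H\simeq\ast$, the $H$-fixed sets of the two left-hand disjoint unions are disjoint unions of contractible pieces indexed by the same set of cosets, and $\lambda^H$ is a componentwise equivalence, so gluing $\ast$ to this union along $\lambda^H$ again yields a contractible space. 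In the remaining case $H\in\calG\setminus\calF$ we have $B_\calF(\Gamma)^H=\emptyset$ and the top-left corner also has empty $H$-fixed set, so $X^H$ is just the $H$-fixed set of the bottom-left disjoint union; here one shows that exactly one of its contractible components is hit, namely the one indexed by the unique $M\in\calM$ that is $\Gamma$-conjugate to the maximal element $M_H$ over $H$ and by the unique coset $gN_\Gamma M$ with $g^{-1}M_H g=M$.

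I expect this last point to be the only real obstacle: proving that $\bigl(\bigsqcup_{M\in\calM}\Gamma\times_{N_\Gamma M}B_{\mathcal{SUB}(M)\cup(\calF\cap N_\Gamma M)}(N_\Gamma M)\bigr)^H$ has \emph{precisely one} contractible component when $H\in\calG\setminus\calF$. This is exactly where the uniqueness of maximal elements in $\calG\setminus\calF$ is used in an essential way: it is what collapses the a priori many pairs $(M,gN_\Gamma M)$ contributing an $H$-fixed point down to a single one, and hence what upgrades the statement that $X$ merely receives a map from $B_\calG(\Gamma)$ to the statement that $X$ is a model for $B_\calG(\Gamma)$. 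Everything else is formal manipulation of classifying spaces for families, the induction--restriction adjunction, and the fact that an equivariant pushout along a cellular inclusion is a homotopy pushout on which fixed points are computed cornerwise.
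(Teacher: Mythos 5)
The paper does not actually prove this statement: it is quoted, essentially verbatim up to passage to orbit spaces, as Corollary~2.8 of \cite{LW}, and no proof environment follows. There is therefore no in-paper argument to compare yours against; the only meaningful question is whether your reconstruction of the L\"uck--Weiermann specialization is correct, and it is. You choose the right equivalence relation on $\calG\setminus\calF$ (containment in the same maximal element), verify that it is closed under passage to supergroups within $\calG\setminus\calF$ and under $\Gamma$-conjugation, and correctly identify $\{g\in\Gamma\mid g^{-1}Mg\sim M\}=N_\Gamma M$ (two $\sim$-equivalent maximal elements coincide) and the associated family as $\mathcal{SUB}(M)\cup(\calF\cap N_\Gamma M)$; with that in place, citing Theorem~2.3 of \cite{LW} would already finish. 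Your cornerwise fixed-point check is the standard argument, and you are right that the only point where the uniqueness hypothesis does genuine work is in the case $H\in\calG\setminus\calF$: it forces the unique maximal element $M_H$ over $H$ to pick out a single representative $M\in\calM$ and a single coset $gN_\Gamma M$ with $g^{-1}M_H g=M$, so that exactly one contractible component survives.

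One notational caution worth flagging: the pushout in the statement is written with orbit spaces $B_{\calF}(\Gamma)$, $B_{\calF\cap N_\Gamma M}(N_\Gamma M)$, and so on, yet is called a cellular $\Gamma$-pushout; your fixed-point computation tacitly lives at the $E$-level (with the induced $\Gamma$-spaces $\Gamma\times_{N_\Gamma M}E_{\calF\cap N_\Gamma M}(N_\Gamma M)$, etc.), which is what Corollary~2.8 of \cite{LW} literally asserts. The $B$-version then follows by applying $\Gamma\backslash(-)$, which, being a left adjoint, preserves pushouts and takes $\Gamma$-CW-pairs to CW-pairs. If you intend to match the literal statement as printed in the paper, you should say that last sentence explicitly.
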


For a group $\Gamma$ given by (\ref{extension}) let us denote by $\Gamma_p=\pi^{-1}(\Z/p^s)$, where $p^s$  is the highest power  of  $p$ which divides $m$. For any subgroup $G\subseteq \Z/m$ we denote by  $G_p$ the  subgroup of $\Gamma$ defined  as   $G_{p}=G/(\Z/p^s\cap G).$ 

For a homology theory $\calH_*(-)$ denote by $\calH_*(-)_{(p)}$ to the localization of $\calH_*$ at $p$, that is, $\calH_*(-)\otimes\Z_{(p)}$, where $\Z_{(p)}$ is the ring of integers localized at the prime ideal $(p)$.

\begin{lemma}\label{p-local}
    For any (reduced or unreduced) homology theory  $\calH_*(-)$ and $p\mid m$, we have that
    \begin{enumerate}[(i)]
    \item $\calH_*(B\Gamma)_{(p)}\cong \left(\calH_*(B(\Gamma_p))_{(p)}\right)_{G_p}$
    \item $\calH_*(\underline{B}\Gamma)_{(p)}\cong \left(\calH_*(\underline{B}(\Gamma_p))_{(p)}\right)_{G_p}.$
    \end{enumerate}
\end{lemma}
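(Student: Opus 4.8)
The plan is to deduce this from Lemma \ref{coinvariants} by choosing the right $G$-CW complex and the right homology theory. Fix a prime $p\mid m$ and let $p^s$ be the highest power of $p$ dividing $m$. First I would observe that $\Gamma_p=\pi^{-1}(\Z/p^s)$ is a normal subgroup of $\Gamma$ with quotient $\Gamma/\Gamma_p\cong (\Z/m)/(\Z/p^s)\cong \Z/m'$, where $m=p^sm'$, and that in fact this quotient group is isomorphic to $G_p$ for $G=\Z/m$ in the notation introduced just before the statement. The point is that $G_p=(\Z/m)/(\Z/p^s\cap \Z/m)=(\Z/m)/(\Z/p^s)$, so the finite group acting is exactly $\Gamma/\Gamma_p$, and its order $m'$ is coprime to $p$. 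For part (i), I would take $X=E\Gamma$ viewed as a free $\Gamma$-CW complex; then $X$ is also a free $\Gamma_p$-CW complex, so $B\Gamma_p=\Gamma_p\backslash X$ carries a residual free action of the group $\Gamma/\Gamma_p\cong G_p$, and $(\Gamma/\Gamma_p)\backslash(\Gamma_p\backslash X)=\Gamma\backslash X=B\Gamma$.

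The subtlety is that $G_p$ is not a $p$-group, so Lemma \ref{coinvariants} does not apply directly with the prime $p$; rather, one wants to invert the primes \emph{other} than those appearing in $|G_p|$. Here I would instead invoke the elementary fact that for a finite group $H$ of order $n$ acting freely (cellularly) on a CW complex $Y$, and any homology theory $\calH_*$, the natural map $\left(\calH_*(Y)\otimes \Z[1/n]\right)_H\to \calH_*(H\backslash Y)\otimes\Z[1/n]$ is an isomorphism — the proof is the same transfer/averaging argument as in Lemma \ref{coinvariants} (equivalently Prop. A.4 of \cite{DL13}), since both sides are $H$-homology theories agreeing on free orbits $H/e$. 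Applying this with $H=G_p$, $Y=B\Gamma_p$, $n=m'$, and noting $|G_p|=m'$ divides $m$ so that $\Z_{(p)}$ is a $\Z[1/m']$-algebra (because $p\nmid m'$), one gets $\calH_*(B\Gamma)_{(p)}\cong\left(\calH_*(B\Gamma_p)\otimes\Z[1/m']\right)_{G_p}\otimes\Z_{(p)}\cong\left(\calH_*(B\Gamma_p)_{(p)}\right)_{G_p}$, where the last step uses that localization at $(p)$ already inverts $m'$ and that coinvariants commute with the flat base change $-\otimes\Z_{(p)}$. This gives (i).

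For part (ii) the argument is formally identical, replacing $E\Gamma$ by $\underline{E}\Gamma$. The one thing to check is that $\underline{E}\Gamma$, restricted along $\Gamma_p\hookrightarrow\Gamma$, is a model for $\underline{E}\Gamma_p$, and that the residual $G_p=\Gamma/\Gamma_p$-action on $\underline{B}\Gamma_p=\Gamma_p\backslash\underline{E}\Gamma$ is free. The first holds because the finite subgroups of $\Gamma_p$ are precisely the finite subgroups of $\Gamma$ contained in $\Gamma_p$, so the fixed-point conditions defining $\underline{E}\Gamma$ restrict correctly. For the second: if a nontrivial element $gÓ\Gamma_p\in G_p$ fixed a point $\Gamma_p x\in\underline{B}\Gamma_p$, then $g$ would stabilize the cell of $x$ in $\underline{E}\Gamma$ up to $\Gamma_p$, producing a finite subgroup of $\Gamma$ mapping onto a nontrivial cyclic subgroup of $G_p$ of order coprime to $p$; under condition \ref{condition:positive} the finite subgroups of $\Gamma$ inject into $\Z/m$ and meet $\Z/p^s$ trivially only when trivial — more precisely, the action being free on $\R^n\setminus\{0\}$ forces every finite subgroup that is \emph{not} inside $\Gamma_p$ to have a fixed point issue, and one uses that the maximal finite subgroups sit over subgroups of $\Z/m$ containing a $p$-Sylow. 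I expect the main obstacle to be exactly this verification that the $G_p$-action on $\underline{B}\Gamma_p$ is free, which is where the structural hypotheses on $\Gamma$ genuinely enter; once freeness is in hand, the transfer argument is routine and identical in both cases.
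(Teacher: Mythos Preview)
Your approach to part (i) via the transfer is correct; the paper instead observes that the Leray--Serre spectral sequence for the fibration $B\Gamma_p\to B\Gamma\to BG_p$ collapses after localizing at $p$, since $|G_p|$ is coprime to $p$ and hence $H_\alpha(G_p;-)_{(p)}=0$ for $\alpha>0$. The two arguments are equivalent.

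For part (ii) there is a genuine gap: the residual $G_p$-action on $\underline{B}\Gamma_p=\Gamma_p\backslash\underline{E}\Gamma$ is \emph{not} free. Take $m=pq$ with $p,q$ distinct odd primes. The splitting embeds $\Z/q\hookrightarrow\Gamma$ with $\Z/q\cap\Gamma_p=\{e\}$, and any point $x\in(\underline{E}\Gamma)^{\Z/q}$ (nonempty by the defining property of $\underline{E}\Gamma$) produces $\Gamma_p x\in\underline{B}\Gamma_p$ whose $G_p$-stabilizer is $\Gamma_p\cdot\Z/q/\Gamma_p\cong\Z/q=G_p$, a global fixed point. Your attempted verification (``maximal finite subgroups sit over subgroups of $\Z/m$ containing a $p$-Sylow'') does not exclude this; condition~\ref{condition:positive} in no way prevents finite $q$-subgroups of $\Gamma$.

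The fix, however, is immediate: the freeness hypothesis in your ``elementary fact'' is superfluous. The very argument you cite---both sides are $G$-homology theories (coinvariants being exact once $|G|$ is inverted) and agree on every orbit $G/H$---shows that for \emph{any} finite group $G$ and \emph{any} $G$-CW complex $Y$ one has $\bigl(\calH_*(Y)\otimes\Z[1/|G|]\bigr)_G\cong\calH_*(G\backslash Y)\otimes\Z[1/|G|]$. This is Lemma~\ref{coinvariants} with ``$p$-group, invert $p$'' replaced by ``finite group, invert the order,'' and its proof is verbatim. With this correction your route to (ii) goes through and is considerably shorter than the paper's, which instead runs an induction over a chain of families $\calF_0\subset\calF_1\subset\cdots\subset\calF_r=\fin$ (one step per prime dividing $m$), invoking the L\"uck--Weiermann pushout of Theorem~\ref{LW} together with Mayer--Vietoris and the five lemma at each stage.
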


\begin{proof}
\begin{enumerate}[(i)]
    \item 
The extension   induces a fibration of classifying spaces.
\begin{equation*}
    B\Gamma_p\to B\Gamma\to BG_p
\end{equation*}

Consider the Leray-Serre spectral sequence associated to this fibration, for the homology theory $\calH_*(-)_{(p)}$. By \cite{milnor} this spectral sequence converges to $\calH_*(B\Gamma)_{(p)}$ with second page:
\begin{equation*}
E^2_{\alpha,\beta}=H_\alpha\left(G_p;\calH_\beta(B\Gamma_p)_{(p)}\right)=
    \begin{cases}
\left(\calH_\beta(B\Gamma_p)_{(p)}\right)_{G_p}&\alpha=0\\
    0&\alpha\neq0.
        \end{cases}
    \end{equation*}
This implies that the sequence collapses without extension problems, so we have proved the first statement.
\item

The idea of the proof is similar to that of Lemma 3.1 in \cite{SV2}. Let $\calF$ be a family of subgroups of $\Gamma$, we say that $\Gamma$ satifies the \emph{$(p^s,\calF)$-condition} if for every homology theory $\calH_*(-)$, the induction 
\begin{equation*}\label{coinvariant}
    \left(\calH_*(B_{\calF\cap\Gamma_p}\Gamma_p)_{(p)}\right)_{G_p}\xrightarrow{Ind_{\Gamma_p}^{\Gamma}} \calH_*(B_\calF\Gamma)_{(p)},
    \end{equation*}
    is an isomorphism. Here, $B_\calF\Gamma$ denotes the orbit space of the classifying space of $\Gamma$ with respect to $\calF$.

    Suppose that $m$ is the product of powers of distinct primes $p_1^{s_1}\cdots p_r^{s_r}$. Let $$\calF_i =\{ H\subset \Gamma\mid H \text{ is finite and }|H|\text{ divides }p_1^{s_1}\cdots p_i^{s_i} \}.$$

    Now we will show that by induction on $i$ that $\Gamma$ satisfies the $(p^s,\calF_i)$-condition, for every $i$.

    For $i=0$, it is the statement (i). 
\end{enumerate}
Now we will construct a model for 
$B_{\mathcal{SUB}(M)\cup(\calF_{i}\cap N_\Gamma M)} (N_\Gamma M)$.

We have a pushout
\begin{equation}\label{union}
\xymatrix{B_{\mathcal{SUB}(M)\cap(\calF_i\cap N_\Gamma M)} (N_\Gamma M)\ar[r]^-i\ar[d]^{\lambda}&B_{\calF_i\cap N_\Gamma M}(N_\Gamma M)\ar[d]\\
B_{\mathcal{SUB}(M)} (N_\Gamma M)\ar[r]&B_{\mathcal{SUB}(M)\cup(\calF_i \cap N_\Gamma M)
}(N_\Gamma M)}\end{equation}
In both cases, families $\mathcal{SUB}(M)$ and $\mathcal{SUB}(M)\cap\calF_i\cap N_\Gamma M$ have a maximal element, $M$ and $M_{i+1}=M/(\Z/p_{i+1}^{s_{i+1}})$ respectively, then $E(W_\Gamma M)$ with the action induced by the quotient map is a model for $E_{\mathcal{SUB}(M)}N_\Gamma M$, then $$B_{\mathcal{SUB}(M)}N_\Gamma M=BW_\Gamma M$$ and $E(N_\Gamma M/(M_{i+1}))$ with the action induced by the quotient map is a model for 
\\$E_{\mathcal{SUB}(M)\cap\calF_i\cap N_\Gamma M}N_\Gamma M$, and  hence 
$$B_{\mathcal{SUB}(M)\cap\calF_i\cap N_\Gamma M}N_\Gamma M=B(N_\Gamma M/M_{i+1}).$$

Notice that both groups $W_\Gamma M$ and $N_\Gamma M/M_{i+1}$ are finite subgroups of $\Z/m$. We have 
\begin{align*}\calH_*(B_{\mathcal{SUB}(M)}N_\Gamma M)_{(p)}&\cong\calH_*(BW_\Gamma M)_{(p)}\\&\cong\left(\calH_*(B(W_\Gamma M\cap \Gamma_p/M))_{(p)}\right)_{G_p}\text{ by (i) }\\&\cong\left(\calH_*(B_{\mathcal{SUB}(M)}(N_\Gamma M\cap \Gamma_p))_{(p)}\right)_{G_p}.\end{align*} and in a similar way

\begin{align*}\calH_*(B_{\mathcal{SUB}(M)\cap\calF_i\cap N_\Gamma M}N_\Gamma M)_{(p)}&\cong\left(\calH_*(B_{\mathcal{SUB}(M)\cap\calF_i\cap N_\Gamma M}(N_\Gamma M\cap \Gamma_p))_{(p)}\right)_{G_p}.\end{align*}
 In others words, $N_\Gamma 
 M$ satisfies $(p^s,\mathcal{SUB}(M))$-condition and $(p^s,\mathcal{SUB}(M)\cap\calF_i\cap N_\Gamma M)$-condition. By the five lemma applied to the morphism of Mayer-Vietoris sequences given by restrict the pushout (\ref{union}) to $\Gamma_p$ we get that $N_\Gamma M$ satisfies the $(p^s,\mathcal{SUB}(M)\cup(\calF_i\cap N_\Gamma M))$-condition.

Finally again by the five lemma applied to the morphism of Mayer-Vietoris sequences given by restriction to $\Gamma_p$ of the pushout (\ref{pushout}) associated to families $\calF_i\subseteq\calF_{i+1}$ we get $\Gamma$ satisfies the $(p^s,\calF_i)$-condition for every $i$ and for $i=r$ we obtain (ii). 

\end{proof}

The following  result  was  proved  as Lemma 4.4  in \cite{LL12}.

\begin{lemma}\label{lemma:sequence-even}
 In the  exact sequence 
 $$0\longrightarrow H^{2r}(\underline{B}(\Z ^{n}\rtimes \Z/p^{s})) \overset{\bar{f}_{*}}{\longrightarrow} H^{2r}(\Z^{n}\rtimes \Z /p^{s})\overset{\varphi^{2r}}{\longrightarrow} \underset{P\in \mathcal{P}}{\bigoplus} H^{2r}(BP),$$
 \begin{itemize}
     \item The homomorphism $\varphi^{2r}$ has torsionfree kernel. 
     \item The abelian  group $H^{2r}(\underline{B}(\Z ^{n}\rtimes \Z/p^{s})) $ is  finitely  generated  and torsionfree. 
    
 \end{itemize}

\end{lemma}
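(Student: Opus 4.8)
My plan is to realise the displayed sequence as a segment of the Mayer--Vietoris sequence attached to the Lück--Weiermann pushout of Theorem~\ref{LW}, and then to reduce both bullets to a single statement about $H^{2r}(\underline{B}\Gamma)$ for $\Gamma=\Z^n\rtimes\Z/p^s$.

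\emph{Step 1 (the pushout and the exact sequence).} I would apply Theorem~\ref{LW} to the families $\calF=\{1\}\subseteq\calG=\fin$ of subgroups of $\Gamma$. Since $\Z^n$ is torsion-free, every finite subgroup of $\Gamma$ embeds into $\Gamma/\Z^n=\Z/p^s$ and is therefore cyclic of $p$-power order; moreover condition~\ref{condition:positive} forces conditions $M$ and $NM$, so each finite subgroup lies in a unique maximal finite subgroup and $N_\Gamma P=P$ for every maximal finite $P$. Taking $\mathcal P$ to be a complete set of representatives of the conjugacy classes of maximal finite subgroups, Theorem~\ref{LW} yields a homotopy pushout of spaces with corners $\bigsqcup_{P\in\mathcal P}B_{\{1\}}(P)=\bigsqcup_{P\in\mathcal P}BP$, $B_{\{1\}}(\Gamma)=B\Gamma$, $\bigsqcup_{P\in\mathcal P}B_{\mathcal{SUB}(P)}(P)=\bigsqcup_{P\in\mathcal P}\mathrm{pt}$ and total space $B_{\fin}(\Gamma)=\underline{B}\Gamma$. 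Its Mayer--Vietoris sequence in $H^{*}(-;\Z)$ contains
$$\bigoplus_{P\in\mathcal P}H^{2r-1}(BP)\longrightarrow H^{2r}(\underline{B}\Gamma)\overset{\bar f_{*}}{\longrightarrow}H^{2r}(\Gamma)\overset{\varphi^{2r}}{\longrightarrow}\bigoplus_{P\in\mathcal P}H^{2r}(BP),$$
where $\bar f_{*}$ is induced by the canonical map $B\Gamma\to\underline{B}\Gamma$ and $\varphi^{2r}$ by the inclusions $BP\hookrightarrow B\Gamma$. Each $P$ is a cyclic $p$-group, hence $H^{\mathrm{odd}}(BP;\Z)=0$, the leftmost term vanishes, and we obtain exactly the short exact sequence of the statement.

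\emph{Step 2 (reduction).} Since $\bar f_{*}$ is injective, $\ker\varphi^{2r}=\operatorname{im}\bar f_{*}\cong H^{2r}(\underline{B}\Gamma)$, so the first bullet is a consequence of the torsion-freeness in the second, and it remains to prove that $H^{2r}(\underline{B}\Gamma)$ is finitely generated and torsion-free. Finite generation is immediate: as $\Gamma$ is crystallographic, its affine action on $\mathbb R^{n}$ is a cocompact model for $\underline{E}\Gamma$ (the fixed set of a finite subgroup is a non-empty affine subspace, hence contractible, and that of an infinite subgroup is empty), so $\underline{B}\Gamma=\mathbb R^{n}/\Gamma$ is a finite $CW$-complex.

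\emph{Step 3 (torsion-freeness --- the main obstacle).} It remains to show that $\ker\varphi^{2r}$ contains no torsion, equivalently that the restriction map $\operatorname{tors}H^{2r}(\Gamma;\Z)\to\bigoplus_{P\in\mathcal P}H^{2r}(BP;\Z)$ is injective; all torsion here is $p$-primary by transfer along the index-$p^{s}$ subgroup $\Z^{n}$, whose cohomology is torsion-free. By universal coefficients (using $\operatorname{tors}H^{2r}(X;\Z)\cong\operatorname{tors}H_{2r-1}(X;\Z)$) this is equivalent to saying that $\operatorname{tors}H_{2r-1}(\Gamma;\Z)$ is generated by the images of $H_{2r-1}(BP;\Z)$, $P\in\mathcal P$, i.e.\ the torsion in the homology of $\Gamma$ is carried by its maximal finite subgroups. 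This is precisely where the freeness hypothesis enters: it makes $H^{*}(\Gamma;\Z)$ computable through the Lyndon--Hochschild--Serre spectral sequence of $1\to\Z^{n}\to\Gamma\to\Z/p^{s}\to1$, with $E_{2}^{a,b}=H^{a}(\Z/p^{s};\Lambda^{b}\Z^{n})$; using a splitting $\Z/p^{s}\hookrightarrow\Gamma$ one shows that the bottom row survives to $E_{\infty}$, that the torsion of $H^{*}(\Gamma;\Z)$ lies in the columns $a\geq1$, and that it is detected by $\varphi^{2r}$ --- here the parity is decisive, since $H^{\mathrm{odd}}(BP;\Z)=0$ whereas $H^{2r}(BP;\Z)\cong\Z/|P|$, so $\varphi^{2r}$ has room to be injective on torsion while $\varphi^{2r+1}$ is forced to vanish. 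Geometrically, $\underline{B}\Gamma=\mathbb R^{n}/\Gamma$ is a closed orientable orbifold of even dimension $n$ (a non-trivial free linear $\Z/p^{s}$-action with $p$ odd forces $n$ even) whose singular locus is the finite set of points with maximal finite isotropy, and one is asserting that the torsion of $H^{2r}(\underline{B}\Gamma)$, a priori concentrated near that locus, vanishes. Controlling the higher differentials and the extension problems in the filtration of $H^{2r}(\Gamma;\Z)$ while identifying $\varphi^{2r}$ on the associated graded is the hard part; this is the content of \cite{LL12}, to which one may in any case appeal directly.
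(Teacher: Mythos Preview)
The paper does not give its own proof of this lemma: it simply records that the statement is Lemma~4.4 of \cite{LL12}. Your proposal is compatible with this and in fact goes further --- you correctly derive the displayed exact sequence from the Mayer--Vietoris sequence of the L\"uck--Weiermann pushout (using $H^{\mathrm{odd}}(BP;\Z)=0$ for cyclic $p$-groups), correctly observe that both bullets reduce to the single claim that $H^{2r}(\underline{B}\Gamma)$ is torsion-free, and correctly note that finite generation follows from the cocompact flat model $\mathbb{R}^n/\Gamma$ for $\underline{B}\Gamma$.

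Your Step~3, where the real content lies, is honestly labelled as a sketch: the analysis of the Lyndon--Hochschild--Serre spectral sequence with coefficients $\Lambda^{\bullet}\Z^n$, the identification of the torsion in $H^{*}(\Gamma;\Z)$ with contributions from columns $a\geq 1$, and the verification that this torsion is detected by the restriction maps to the maximal finite subgroups in even degree, is precisely what \cite{LL12} carries out in detail under the free-action hypothesis. Since you explicitly defer to \cite{LL12} at the end, your write-up is consistent with the paper's treatment --- it just unpacks the statement more than the paper does before invoking the same reference.
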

Applying Lemma \ref{p-local} and the Universal Coefficient Theorem, we obtain the following result. 
\begin{corollary}\label{lemma:integral-homology-odd}
The abelian groups  
$$H_{2r+1}(\underbar{B} \Gamma , \Z)$$
are  finitely  generated  and  torsion-free. 
\end{corollary}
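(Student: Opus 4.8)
The plan is to establish the two assertions separately: finite generation will follow at once from the existence of a cocompact model for $\underline{E}\Gamma$, and torsion-freeness will be checked one prime at a time after localization, combining Lemma \ref{lemma:sequence-even} with Lemma \ref{p-local} and the Universal Coefficient Theorem, exactly as announced.

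First I would record that $\Gamma=\Z^n\rtimes_\rho\Z/m$ is crystallographic, so it acts properly, isometrically and cocompactly on $\dbR^n$; hence $\dbR^n$ is a cocompact model for $\underline{E}\Gamma$, $\underline{B}\Gamma$ has the homotopy type of a finite CW complex, and $H_\ast(\underline{B}\Gamma;\Z)$ is finitely generated in every degree. It then remains to prove that $H_{2r+1}(\underline{B}\Gamma;\Z)$ has no $p$-torsion for every prime $p$, i.e. that $H_{2r+1}(\underline{B}\Gamma;\Z)_{(p)}$ is torsion-free over $\Z_{(p)}$.

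For a prime $p\mid m$, write $m=p^s m'$ with $(p,m')=1$, and set $\Gamma_p=\pi^{-1}(\Z/p^s)$ and $G_p=(\Z/m)/(\Z/p^s)$, a group of order $m'$ prime to $p$. By Lemma \ref{p-local}(ii) there is an isomorphism $H_{2r+1}(\underline{B}\Gamma;\Z)_{(p)}\cong\bigl(H_{2r+1}(\underline{B}\Gamma_p;\Z)_{(p)}\bigr)_{G_p}$, and since $|G_p|$ is a unit in $\Z_{(p)}$ the averaging idempotent $\tfrac1{|G_p|}\sum_{g\in G_p}g$ realizes these coinvariants as a direct $\Z_{(p)}$-summand of $H_{2r+1}(\underline{B}\Gamma_p;\Z)_{(p)}$. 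So it suffices to show $H_{2r+1}(\underline{B}\Gamma_p;\Z)$ is torsion-free. Now $\Gamma_p=\Z^n\rtimes\Z/p^s$ is virtually $\Z^n$, hence admits a cocompact model for $\underline{E}\Gamma_p$, so $H_{2r+1}(\underline{B}\Gamma_p;\Z)$ is finitely generated; the Universal Coefficient Theorem then identifies its torsion subgroup with $\operatorname{Ext}^1_{\Z}\!\bigl(H_{2r+1}(\underline{B}\Gamma_p;\Z),\Z\bigr)$, which is a subgroup of $H^{2r+2}(\underline{B}\Gamma_p;\Z)=H^{2r+2}(\underline{B}(\Z^n\rtimes\Z/p^s);\Z)$. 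By Lemma \ref{lemma:sequence-even}, applied with $r$ replaced by $r+1$, this last group is torsion-free; a finite subgroup of a torsion-free group is trivial, so $H_{2r+1}(\underline{B}\Gamma_p;\Z)$ is torsion-free, and therefore so is its summand $H_{2r+1}(\underline{B}\Gamma;\Z)_{(p)}$.

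For a prime $p\nmid m$ I would argue by transfer. Restricting the $\Gamma$-action on $\underline{E}\Gamma$ to the torsion-free subgroup $\Z^n$ exhibits $\underline{E}\Gamma$ as a model for $E\Z^n$, so that $\Z^n\backslash\underline{E}\Gamma$ is a finite $K(\Z^n,1)$-complex and $\underline{B}\Gamma=\Gamma\backslash\underline{E}\Gamma$ is its quotient by the residual action of $\Z/m$. As $|\Z/m|$ is invertible in $\Z_{(p)}$, transfer (the argument behind Lemma \ref{coinvariants}, now for the finite group $\Z/m$) gives $H_{2r+1}(\underline{B}\Gamma;\Z)_{(p)}\cong\bigl(H_{2r+1}(T^n;\Z)_{(p)}\bigr)_{\Z/m}$, a direct summand of the free $\Z_{(p)}$-module $H_{2r+1}(T^n;\Z)_{(p)}$, hence torsion-free. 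Combining the two cases shows $H_{2r+1}(\underline{B}\Gamma;\Z)_{(p)}$ is torsion-free for every prime $p$, so $H_{2r+1}(\underline{B}\Gamma;\Z)$ is finitely generated and torsion-free.

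The whole argument is bookkeeping around Lemma \ref{lemma:sequence-even}; I expect the only genuine points of care to be that coinvariants under a group of order prime to $p$ split off as a $\Z_{(p)}$-module summand, and the prime $p\nmid m$, which is not literally covered by Lemma \ref{p-local} and is handled instead by the transfer for $T^n\to\underline{B}\Gamma$ — alternatively one may note that the proof of Lemma \ref{p-local} uses only that $|G_p|$ is invertible in $\Z_{(p)}$ and so applies to every prime $p$ (with $p^s=1$ when $p\nmid m$), making the two cases uniform.
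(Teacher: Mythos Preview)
Your proof is correct and follows essentially the same route as the paper: the one-sentence argument there (``Applying Lemma \ref{p-local} and the Universal Coefficient Theorem'') is precisely what you unpack --- reduce to $\Gamma_p=\Z^n\rtimes\Z/p^s$ via Lemma \ref{p-local}(ii), then feed the torsion-freeness of $H^{2r+2}(\underline{B}\Gamma_p;\Z)$ from Lemma \ref{lemma:sequence-even} through the UCT to kill the torsion in $H_{2r+1}(\underline{B}\Gamma_p;\Z)$. Your explicit treatment of finite generation and of primes $p\nmid m$ via transfer are the standard details the paper leaves implicit.
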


 \section{Proof  of  the positive result in the  even dimensional case.  }
In this  section  we  will prove  the  even  version  of  Theorem \ref{theo:global}. 

\begin{theorem}\label{theo:main-par}
 Let $M^n$ be an $n$-dimensional smooth spin manifold, where $n\geq 5$ is  even, and  with fundamental group isomorphic to $\Gamma$.  Denote  by $f_{M}: M\to B\Gamma $ the  classifying map for  the  fundamental group. Assume  that $\alpha(M) =0$. Then $M$ admits a  metric of  positive scalar curvature.
\end{theorem}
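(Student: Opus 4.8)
The strategy is to chase the GLR obstruction class $D[f_M] \in ko_n(B\Gamma)$ through the commutative diagram of Theorem \ref{diagram:main} and show that under the hypothesis $\alpha(M)=0$ it must already vanish; then, since $n\geq 5$, the Stolz surgery-theoretic argument (the fact that $D[f_M]=0$ suffices for a psc metric once one knows the GLR conjecture holds for the relevant building blocks) completes the proof. Concretely, set $r=n$ and consider the class $\beta(D[f_M]) \in ko_n(\underline{B}\Gamma)$. Since the left column of the diagram in Theorem \ref{diagram:main} sends $D[f_M]$ to $A\circ p_{B\Gamma}(D[f_M]) = \alpha(M) = 0$, commutativity gives $p_{\underline{B}\Gamma}(\beta(D[f_M])) = 0$, so $\beta(D[f_M]) \in \ker(p_{\underline{B}\Gamma})$. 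By Lemma \ref{periodicity} this kernel consists only of $m$-torsion.

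Next I would use parity. Because $n$ is even, the relevant homology of $\underline{B}\Gamma$ in degree $n$ is governed by Lemma \ref{lemma:sequence-even} (together with Lemma \ref{p-local}, which reduces computations over $\Gamma$ to the $p$-local pieces $\Gamma_p = \Z^n\rtimes\Z/p^s$ and their coinvariants under $G_p$). The point is that for $n$ even, $ko_n(\underline{B}\Gamma)$ — or at least the part of it receiving $\beta(D[f_M])$ — is torsion-free: Lemma \ref{lemma:sequence-even} says $H^{2r}(\underline{B}(\Z^n\rtimes\Z/p^s))$ is finitely generated and torsion-free, and feeding this through the real connective $K$-homology Atiyah–Hirzebruch spectral sequence (with $ko_*(\mathrm{pt})$ having its torsion concentrated in degrees $\equiv 1,2 \bmod 8$) shows the even-degree connective $ko$-homology of $\underline{B}\Gamma$, localized away from nothing, carries no $m$-torsion when $m$ is odd (Condition \ref{condition:positive}). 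Hence the only $m$-torsion class in $\ker(p_{\underline{B}\Gamma}) \cap ko_n(\underline{B}\Gamma)$ that can arise is $0$, so $\beta(D[f_M]) = 0$.

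Then $D[f_M]$ lies in the image of $\bigoplus_{(N)\in\mathcal{N}} \widetilde{ko}_n(BN)$, i.e. it comes from the finite (cyclic, odd-order) subgroups $N$. For each such $N$ of odd order, $\widetilde{ko}_n(BN)$ is all torsion of odd order, and the GLR conjecture is known for finite groups of odd order (indeed for all finite groups with periodic cohomology / for cyclic groups this is classical work of Rosenberg–Stolz, Botvinnik–Gilkey–Stolz); combined with the assembly/transfer compatibility this forces the contribution of $D[f_M]$ to be realizable by psc manifolds over $BN$, hence bordant away. One then assembles: $D[f_M]=0$ in $ko_n(B\Gamma)$, so by Stolz's theorem $M$ (being spin, $n\geq 5$, with $\pi_1 = \Gamma$) admits a metric of positive scalar curvature. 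The main obstacle is the middle step — rigorously passing from the torsion-freeness of $H^{2r}(\underline{B}\Gamma)$ to the absence of $m$-torsion in $ko_n(\underline{B}\Gamma)$ in the relevant range, controlling the differentials and extensions in the spectral sequence using oddness of $m$; the reduction via Lemmas \ref{p-local}, \ref{coinvariants} and the periodicity argument of Lemma \ref{periodicity} does most of the bookkeeping, but the interaction between the $ko$-coefficient torsion and the $m$-torsion has to be handled with care.
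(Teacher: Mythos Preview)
Your overall skeleton---chase $D[f_M]$ through the diagram of Theorem~\ref{diagram:main}, use Lemma~\ref{periodicity} to see $\beta(D[f_M])$ is $m$-torsion, then kill it by a torsion-freeness statement---is exactly right.  But the specific torsion-freeness statement you aim for is not the one the paper proves, and your route to it has a genuine gap.

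You try to show that $ko_n(\underline{B}\Gamma)$ has no $m$-torsion for $n$ even, citing Lemma~\ref{lemma:sequence-even}.  That lemma says $H^{2r}(\underline{B}(\Z^n\rtimes\Z/p^s))$ is torsion-free; via the Universal Coefficient Theorem this yields torsion-freeness of the \emph{odd} homology (this is Corollary~\ref{lemma:integral-homology-odd}), but it tells you nothing about the \emph{even} homology $H_{2r}(\underline{B}\Gamma)$, which is precisely what feeds the relevant columns of the Atiyah--Hirzebruch spectral sequence in even total degree.  In fact $ko_{2r}(\underline{B}\Gamma)$ can carry $m$-torsion: in the long exact sequence of the pushout, $\mathrm{coker}\,\beta$ sits inside $\bigoplus_N\widetilde{ko}_{2r-1}(BN)$, which is pure $m$-torsion, and there is no reason for the boundary map to vanish.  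So the ``main obstacle'' you flag is not just bookkeeping---it is an actual obstruction to your line of argument.

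The paper sidesteps this by proving the torsion statement on the other side: Lemma~\ref{label:ko-even} shows that $ko_{2r}(B\Gamma)$ (not $\underline{B}\Gamma$) has no $m$-torsion, via the Atiyah--Hirzebruch--Leray--Serre spectral sequence of the extension $\Z^n\to\Gamma\to\Z/m$ together with a careful $\Z_{(2)}[\Z/p^s]$-module splitting argument.  The endgame is then much simpler than yours: for $N$ cyclic of odd order one has $\widetilde{ko}_{2r}(BN)=0$ (immediate from the AHSS, since $\widetilde{H}_{\mathrm{even}}(BN)=0$ and the $ko$-coefficient torsion is $2$-primary), so $\beta$ is \emph{injective} in even degrees.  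Hence $D[f_M]$ itself is $m$-torsion in $ko_{2r}(B\Gamma)$, and therefore zero.  There is no need to invoke the GLR conjecture for cyclic groups or to produce psc representatives over $BN$; that manoeuvre is reserved for the odd-dimensional case (Theorem~\ref{theo:main-impar}), where $\widetilde{ko}_{2r+1}(BN)\neq 0$.  Finally, note that ``$D[f_M]$ comes from psc-representable classes over $BN$'' would give bordism of $M$ to a psc manifold (enough, via Stolz), not $D[f_M]=0$; your last paragraph conflates the two.
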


  The proof of this theorem requires some work.

\begin{lemma}\label{label:ko-even}
    For $*$ even, $ko_*(B\Gamma)$ does not contain $m$-torsion
\end{lemma}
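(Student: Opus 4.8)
The plan is to reduce the claim about $ko_*(B\Gamma)$ for $*$ even to the corresponding statement about $B\Gamma_p$ for each prime $p \mid m$, and then analyze the latter via the fibration $B\Z^n \to B\Gamma_p \to B\Z/p^s$. First I would invoke Lemma \ref{p-local}(i), which gives $ko_*(B\Gamma)_{(p)} \cong \left(ko_*(B\Gamma_p)_{(p)}\right)_{G_p}$; since coinvariants of a $\Z_{(p)}$-module with respect to a finite group acting is a quotient, it suffices to show that $ko_*(B\Gamma_p)_{(p)}$ has no $p$-torsion in even degrees (and then let $p$ range over all primes dividing $m$, noting that $m$-torsion is detected prime by prime). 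So the whole problem localizes: fix a prime $p \mid m$, write $\Gamma_p = \Z^n \rtimes \Z/p^s$, and show $ko_{2r}(B\Gamma_p)$ has no $p$-torsion.

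\textbf{Main step.} For the fixed prime $p$, the strategy is to play connective $ko$ off against periodic $KO$ using the kernel analysis already set up in Lemmas \ref{periodicity} and \ref{lemma:sequence-even}. I would use the Atiyah–Hirzebruch spectral sequence for $ko_*(B\Gamma_p)$ with $E^2_{s,t} = H_s(B\Gamma_p; ko_t(\mathrm{pt}))$, together with Corollary \ref{lemma:integral-homology-odd} and Lemma \ref{lemma:sequence-even} which tell us that the relevant integral (co)homology groups of $\underline{B}(\Z^n \rtimes \Z/p^s)$ in the appropriate parities are finitely generated and torsion-free, and that the comparison map to $H^*(B\Gamma_p)$ has torsion-free kernel. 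The key point is that the only torsion in $ko_*(B\Gamma_p)$ that is not seen by $KO_*(B\Gamma_p)$ comes from the $\Z/2$ in $ko_1 = ko_2 = \Z/2$ of the coefficients, which contributes only in degrees $*$ of a fixed parity mod $8$; combined with the fact (Davis–Lück, proof of Thm. 0.7 in \cite{DL13}) that the relevant part of $H_*(B\Gamma_p)$ is torsion-free, the $p$-torsion in even degrees of $ko_*(B\Gamma_p)$ must inject into $KO_*(B\Gamma_p)$, hence into $KO_*(\underline{B}\Gamma_p)$ up to $m$-torsion by Lemma \ref{periodicity}; but the target is torsion-free by Lemma \ref{lemma:sequence-even} via the universal coefficient theorem, so there is no $p$-torsion.

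\textbf{Expected obstacle.} The hard part will be the bookkeeping of which degrees and which torsion primes interact: the $\Z/2$-torsion coming from $ko$-coefficients behaves differently from odd $p$-torsion, and the reduction to $B\Gamma_p$ only controls $p$-torsion, so for the prime $2$ one must argue separately (and here condition \ref{condition:positive} forcing $m$ odd is essential — $2 \nmid m$ means there is no $\Gamma_2$ step and we only ever need odd-primary statements, where $ko$ and $KO$ agree more tightly). I would therefore organize the argument as: (1) if $p$ is odd, $ko$ and $KO$ localized at $p$ differ only by the $ku$-to-$KU$-style truncation, and the even-degree torsion is governed by $H_{\mathrm{even}}$ which is torsion-free; (2) assemble over all $p \mid m$. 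The most delicate verification is making precise the claim that even-degree $p$-torsion in $ko_*(B\Gamma_p)$ injects under periodicity, which rests on unwinding the Atiyah–Hirzebruch differentials together with the torsion-freeness inputs from Section \ref{sec:connective}; I would lean on the Davis–Lück computation to avoid re-deriving the full $ko$-homology.
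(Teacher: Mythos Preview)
Your reduction via Lemma~\ref{p-local}(i) to the $p$-local statement for $\Gamma_p = \Z^n \rtimes \Z/p^s$ is the same first move as the paper, and correct. After that, however, your argument has a genuine gap.

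The central problem is that you repeatedly invoke torsion-freeness results (Lemma~\ref{lemma:sequence-even}, Corollary~\ref{lemma:integral-homology-odd}, and Lemma~\ref{periodicity}) that are about $\underline{B}\Gamma$, while the object you must control is $B\Gamma_p$. The integral homology $H_*(B\Gamma_p;\Z)$ is full of $p$-torsion --- $\Gamma_p$ has $p$-torsion and $B\Gamma_p$ is infinite-dimensional --- so the statement that ``the relevant part of $H_*(B\Gamma_p)$ is torsion-free'' is simply false, and your Atiyah--Hirzebruch analysis for $ko_*(B\Gamma_p)$ collapses. Likewise, Lemma~\ref{periodicity} bounds the kernel of $p_{\underline{B}\Gamma}\colon ko_*(\underline{B}\Gamma)\to KO_*(\underline{B}\Gamma)$, not the kernel of $ko_*(B\Gamma_p)\to KO_*(B\Gamma_p)$; the latter map can and does have $p$-torsion kernel in general, so the claimed injection of even-degree $p$-torsion under periodicity is unsupported. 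In short, the passage from $B\Gamma_p$ to $\underline{B}\Gamma_p$ is exactly where the $p$-torsion lives, and you cannot borrow the $\underline{B}$-side torsion-freeness to control the $B$-side.

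The paper's route is quite different and does not go through the $ko\to KO$ comparison at all. It runs the Leray--Serre--Atiyah--Hirzebruch spectral sequence for the extension $\Z^n\to\Gamma_p\to\Z/p^s$, with $E^2_{i,j}=H_i(\Z/p^s;ko_j(B\Z^n))$. The real work is to upgrade the Davis--L\"uck additive splitting $ko_j(B\Z^n)\cong\bigoplus_l H_l(\Z^n)\otimes ko_{j-l}(*)$ to an isomorphism of $\Z[\Z/p^s]$-modules (done case-by-case on $j\bmod 8$, using that the free-outside-the-origin hypothesis makes the norm element act by zero, so the relevant modules are projective over a Dedekind quotient). Once that is in hand, the Langer--L\"uck Tate vanishing $\widehat{H}^{i+1}(\Z/p^s;H_*(\Z^n))=0$ in the appropriate parity forces $E^2_{i,j}=0$ for $i+j$ even and $i>0$, and shows the edge map $ko_{2r}(B\Z^n)_{\Z/p^s}\to ko_{2r}(B\Z^n)^{\Z/p^s}$ is injective; both together give that $ko_{2r}(B\Gamma_p)\cong ko_{2r}(B\Z^n)_{\Z/p^s}$ is $p$-torsion-free. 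None of this is captured by your proposed periodicity comparison.
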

\begin{proof}
  By \cref{p-local} it is enough to prove that for any odd prime $p$ dividing $m$, we have that $ko_*(B(\Z^n\rtimes\Z/p^s))_{G_p}$ does not contain $p^s$-torsion.

  Consider the Atiyah-Hirzebruch- Leray-Serre spectral sequence associated to the extension
  $$0\to\Z^n\to\Z^n\rtimes\Z/p^s\to\Z/p^s\to0,$$
   coverging to $ko_*(B(\Z^n\rtimes\Z/p^s))$. The second page is given by
   $$E^2_{p,q}=H_i(\Z/p^s;ko_j(B\Z^n)).$$
Let us first compute $ko_j(B\Z^n))$. By Lemma 5.3 in \cite{DL13} we have isomorphisms of abelian groups
\begin{align}\label{ko:2}
    ko_j(B\Z^n)\otimes\Z[1/2]&\cong\bigoplus_{l=0}^nH_l(\Z^n)\otimes ko_j(*)\otimes\Z[1/2]\\
    \label{ko:p}ko_j(B\Z^n)\otimes\Z_{(2)}&\cong\bigoplus_{l=0}^nH_l(\Z^n)\otimes ko_j(*)_{(2)}.
\end{align}
Now we will prove that both are  actually isomorphisms of $\Z[\Z/p^s]$-modules. Note that $ko_j(*)\otimes\Z[1/2]$ is torsion free, then the Chern character shows that the map (\ref{ko:2}) is an isomorphism of $\Z[\Z/p^s]$-modules. The isomorphism (\ref{ko:p}) implies that the Atiyah-Hirzebruch spectral sequence with second page$$E^2_{i,j}=H_i(\Z^n;ko_j(*)_{(2)})$$converging to $ko_{i+j}(B\Z^n)_{(2)}$ collapses. On the other hand, this spectral sequence is natural with respect to $\Z/p^s$-module structure of $\Z^n$.  We have a filtration of $\Z_{(2)}[\Z/p^s]$-modules
$$ko_r(B\Z^n)_{(2)}=F_{0,r}\supseteq\ldots\supseteq F_{r,0}\supseteq F_{r+1,-1}=0$$
and exact sequences of $\Z_{(2)}[\Z/p^s]$-modules
$$0\to F_{i+1,r-i-1}\to F_{i,r-i}\to H_i(\Z^n)\otimes ko_{r-i}(*)_{(2)}\to0$$
It is enough to prove that the above exact sequence splits as $\Z_{(2)}[\Z/p^s]$-modules.

Suppose first that $r-i\equiv 3,5,6,7$ (mod 8), then $ko_{r-i}(*)=0$, and so the exact sequence splits trivially. 

If $r-i\equiv 0,4$ (mod 8), then $ko_{r-i}(*)\cong\Z$. On the other hand, $H_i(\Z^n)\otimes ko_{r-i}(*)_{(2)}$ is a finite generated $\Z_{(2)}[\Z/p^s]$-module that is torsion free as $\Z_{(2)}$-module, as the action is free outside the origin, the norm element $x$ of $\Z_{(2)}[\Z/p^s]$ acts by zero, then $H_i(\Z^n)\otimes ko_{m-i}(*)_{(2)}$ can be considered as a $\Z_{(2)}[\Z/p^s]/\langle x\rangle$-module. But $\Z_{(2)}[\Z/p^s]/x$ is a Dedekind domain (being isomorphic to the $p^s$-ciclotomic ring), then $H_i(\Z^n)\otimes ko_{m-i}(*)_{(2)}$ is $\Z_{(2)}[\Z/p^s]/x$-projective, and hence it is $\Z_{(2)}[\Z/p^s]$-projective, and  hence the sequence splits.

Finally, if $r-i\equiv 1,2$ (mod 8). As the Atiyah-Hirzebruch spectral sequence collapses, we have an splitting of abelian groups
$$s\colon H_i(\Z^n)\otimes ko_{r-i}(*)_{(2)}\to F_{i,r-i}$$ such that $\pi\circ s= id$. Define
\begin{align*}
    \widetilde{s}:H_i(\Z^n)\otimes ko_{r-i}(*)_{(2)}&\to F_{i,r-i}\\
    y&\mapsto \sum_{g\in\Z/p^s}g\cdot s(g^{-1}x).
\end{align*}
$\widetilde{s}$ is a homomorphism of $\Z_{(2)}[\Z/p^s]$-modules and as $p$ is odd $\pi\circ\widetilde{s}$ is multiplication by $p^s$, but it is the identity because $ko_{m-i}(*)$ is isomorphic to $\Z/2$.

Then the maps (\ref{ko:2}) and (\ref{ko:p}) are isomorphisms of $\Z[\Z/p^s]$-modules. Using that we have $$\widehat{H}^{i+1}(\Z/p^s;ko_j(B\Z^n))\cong \bigoplus_{l}\widehat{H}^{i+1}(\Z/p^s;H_{j-4l}(\Z^n)).$$Then using the universal coefficient theorem and Theorem 3.2 in \cite{LL12} we obtain
$$\widehat{H}^{i+1}(\Z/p^s;ko_j(B\Z^n))=0 \text{ if } i+j \text{ is even .}$$

In particular this implies that the canonical map $$E^2_{0,2r}=ko_{2r}(B\Z^n)_{\Z/p^s}\to ko_{2r}(B\Z^n)^{\Z/p^s}$$is injective, because $\widehat{H}^{-1}(\Z/p^s;ko_{2r}(B\Z^n))=0$. Then $ko_{2r}(B\Z^n)_{\Z/p^s}$ does not contain $p^s$-torsion. On the other hand, $E^2_{i,j}$  is zero if $i+j$ is even and positive, then

$$ko_{2r}(B\Z^n)_{\Z/p^s}=E^2_{0,2r}=E^\infty_{0,2r}\cong ko_{2r}(B(\Z^n\rtimes\Z/p^s)),$$

then $ko_{2r}(B(\Z^n\rtimes\Z/p^s))$ does not contain $p^s$-torsion and we conclude that $ko_{*}(B\Gamma)$ does not contains $m$-torsion if $*$ is even.
\end{proof}
\begin{proof}(\cref{theo:main-par})
Notice that from the Atiyah-Hirzebruch spectral sequence we obtain, for even indices, $\widetilde{ko}_*(B\Z/p^s)=0$. Then from  \cref{diagram:main}
  we conclude that for even degrees $\beta$ is injective   and from Lemma \ref{periodicity} we get $D[f_M]=0$, by Prop. 12.1 in \cite{DL13} we get that $M$ admits a metric with positive scalar curvature.

\end{proof}

\section{Proof of the positive result  in the  odd dimensional case. }
In this section we will prove the odd version of Theorem \ref{theo:global}
\begin{theorem}\label{theo:main-par}
 Let $M^n$ be an $n$-dimensional smooth spin manifold, where $n\geq 5$ is  odd, and  with fundamental group isomorphic to $\Gamma$.  Denote  by $f_{M}: M\to B\Gamma $ the  classifying map for  the  fundamental group. Assume  that $\alpha(M) =0$. Then $M$ admits a  metric of  positive scalar curvature.
\end{theorem}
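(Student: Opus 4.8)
The plan is to begin with a parity observation that makes the hypothesis of Theorem \ref{theo:global} almost trivial when $n$ is odd. If $n$ is odd, then $S^{n-1}$ is even-dimensional, and a cyclic group of odd order $m>1$ cannot act freely on an even-dimensional sphere: a generator $t$ acting without fixed points would have vanishing Lefschetz number, $0=L(t)=1+\deg(t)$, so $\deg(t)=-1$, whereas $\deg(t^{m})=(-1)^{m}=-1\neq 1=\deg(\mathrm{id})$, a contradiction. Hence the requirement in Condition \ref{condition:positive} that $\mathbb{Z}/m$ act freely on $\mathbb{R}^{n}\smallsetminus\{0\}$ is compatible with $n$ odd only when $m=1$, that is, $\Gamma\cong\mathbb{Z}^{n}$. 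In that case $\mathbb{Z}^{n}$ is torsion-free, so $\underline{E}\Gamma=E\Gamma$, $\underline{B}\Gamma=B\mathbb{Z}^{n}$, and the set $\mathcal{N}$ of conjugacy classes of maximal finite subgroups is empty. Recording this reduction would be the first step.

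Granting $\mathcal{N}=\varnothing$, the remainder is the degenerate instance of the even-dimensional proof (Theorem \ref{theo:main-par}). The left-hand term in the exact row of Theorem \ref{diagram:main} vanishes, so $\beta\colon ko_{n}(B\Gamma)\to ko_{n}(\underline{B}\Gamma)$ is an isomorphism. By Lemma \ref{periodicity} the kernel of $p_{\underline{B}\Gamma}\colon ko_{n}(B\mathbb{Z}^{n})\to KO_{n}(B\mathbb{Z}^{n})$ consists only of $m$-torsion, hence vanishes since $m=1$; this is what Lemma \ref{lemma:no-ptorsion-odd} yields here. (In its general form Lemma \ref{lemma:no-ptorsion-odd} states that $ko_{*}(\underline{B}\Gamma)$ carries no $m$-torsion in odd total degree, and the proof parallels that of Lemma \ref{label:ko-even}: localize at each $p\mid m$ via Lemma \ref{p-local}, pass to $\Gamma_{p}=\mathbb{Z}^{n}\rtimes\mathbb{Z}/p^{s}$, and analyze the Atiyah--Hirzebruch spectral sequence, whose $E^{2}$-page in odd total degree is built from the torsion-free groups of Corollary \ref{lemma:integral-homology-odd} in the lines with $ko_{j}(\ast)=\mathbb{Z}$ and from $\mathbb{Z}/2$-modules in the lines with $ko_{j}(\ast)=\mathbb{Z}/2$, neither contributing $p$-torsion; with $m=1$ all of this is vacuous.) Now $\alpha(M)=0$ together with commutativity of the diagram of Theorem \ref{diagram:main} gives $p_{\underline{B}\Gamma}(D[f_{M}])=p_{\underline{B}\Gamma}\beta(D[f_{M}])=0$, and the injectivity of $p_{B\mathbb{Z}^{n}}$ in degree $n$ (already used in the even case, cf.\ \cite{DL13}) forces $D[f_{M}]=0$.

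Finally, $D[f_{M}]=0$ together with Proposition 12.1 of \cite{DL13} shows that $M$ admits a metric of positive scalar curvature, exactly as in the even-dimensional conclusion. The only genuinely new point is the parity observation of the first paragraph; once it is in place there is no substantial obstacle, everything reducing to the positive-scalar-curvature theory for $B\mathbb{Z}^{n}$. If one preferred to argue uniformly, without first reducing to $\mathbb{Z}^{n}$, the single delicate step would be the control of Atiyah--Hirzebruch differentials in Lemma \ref{lemma:no-ptorsion-odd}, handled as in Lemma \ref{label:ko-even}.
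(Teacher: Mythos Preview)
Your argument rests on conflating two different uses of the letter $n$: the dimension of the manifold $M^{n}$ in the theorem statement, and the rank of the free abelian subgroup $\mathbb{Z}^{n}$ appearing in Condition~\ref{condition:positive} and in the definition of $\Gamma=\mathbb{Z}^{n}\rtimes_{\rho}\mathbb{Z}/m$. These are \emph{not} the same integer. The paper's notation is admittedly overloaded, but the intent is unambiguous from context: the detailed odd-case statement (Theorem~\ref{theo:main-impar}) writes the manifold dimension as $2r+1$, keeping it separate from the rank, and the proof there makes essential use of the nontrivial maximal finite subgroups $N\in\mathcal{N}$ of $\Gamma$. If your reduction to $m=1$ were valid, all of that would be vacuous. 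There is no reason the dimension of a closed spin manifold should agree with the rank of the lattice in its fundamental group.

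What the paper actually does is the following. From $\alpha(M)=0$ and the diagram of Theorem~\ref{diagram:main} one obtains $p_{\underline{B}\Gamma}\bigl(\beta(D[f_{M}])\bigr)=0$. Lemma~\ref{periodicity} says $\ker(p_{\underline{B}\Gamma})$ is $m$-torsion, while Lemma~\ref{lemma:no-ptorsion-odd} says $ko_{2r+1}(\underline{B}\Gamma)$ carries no odd-prime torsion; together these give $\beta(D[f_{M}])=0$. Up to here your parenthetical sketch is roughly right. The point you miss is that in odd degree $\widetilde{ko}_{2r+1}(B\mathbb{Z}/p^{s})$ is \emph{nonzero}, so $\beta$ is not injective and one cannot conclude $D[f_{M}]=0$. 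Instead, exactness of the top row writes $D[f_{M}]=\varphi_{2r+1}\bigl(\sum_{N}X_{N}\bigr)$ for some $X_{N}\in\widetilde{ko}_{2r+1}(BN)$. The substantive remaining work is to lift each $X_{N}$ to a spin bordism class via Lemma~\ref{lemma:bordism-surjective-odd}, observe from Lemma~\ref{lemma:realktheory-finitecyclic} that $\widetilde{KO}_{2r+1}(\mathbb{R}[\mathbb{Z}/p^{s}])=0$, and then invoke the known Gromov--Lawson--Rosenberg conjecture for finite groups with periodic cohomology \cite{BGS} to replace each lift by a positive-scalar-curvature representative. This is precisely where the odd case differs from the even one, and it is entirely absent from your proposal.
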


As in the previous section the proof of this theorem requires some lemmas.

\begin{lemma}\label{lemma:no-ptorsion-odd}
Let $r$ be  a  natural number. Then, the  $ko$-homology  group
$$ko_{2r+1}(\underbar{B}\Gamma) $$
does not contain $p$-torsion for $p\neq 2$. 
\end{lemma}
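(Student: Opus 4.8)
The plan is to work one prime at a time, reduce to the crystallographic group $\Gamma_p=\Z^n\rtimes\Z/p^s$ (with $p^s$ the largest power of $p$ dividing $m$), and there read off the torsion of $ko_{2r+1}$ from the cellular pushout presenting $\underline{B}\Gamma_p$. Fix an odd prime $p$. If $p\nmid m$ the statement is straightforward: the pushout of Theorem~\ref{LW} (which applies since $\Gamma$ satisfies conditions M and NM) exhibits the cofibre of $B\Gamma\to\underline{B}\Gamma$ as a wedge of suspensions of classifying spaces $BM$ of finite subgroups $M\leq\Gamma$; their reduced $ko$-homology is annihilated by a power of $|M|$ and so vanishes $p$-locally, so $ko_{2r+1}(\underline{B}\Gamma)_{(p)}\cong ko_{2r+1}(B\Gamma)_{(p)}$, and the Lyndon--Hochschild--Serre spectral sequence of $B\Z^n\to B\Gamma\to B\Z/m$ collapses $p$-locally to identify this with the $\Z/m$-coinvariants of the free $\Z_{(p)}$-module $ko_{2r+1}(B\Z^n)_{(p)}$ --- a direct summand of it, hence torsion free. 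So assume $p\mid m$. By Lemma~\ref{p-local}(ii), $ko_{2r+1}(\underline{B}\Gamma)_{(p)}\cong\big(ko_{2r+1}(\underline{B}\Gamma_p)_{(p)}\big)_{G_p}$; since $|G_p|$ is prime to $p$, the functor of $G_p$-coinvariants on $\Z_{(p)}[G_p]$-modules is exact and preserves direct summands (it agrees with $G_p$-invariants). As $\underline{B}\Gamma_p$ is a finite complex, $ko_{2r+1}(\underline{B}\Gamma_p)_{(p)}$ is finitely generated over the PID $\Z_{(p)}$, so it suffices to show it is torsion free.

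For $\Gamma_p=\Z^n\rtimes\Z/p^s$ the pushout of Theorem~\ref{LW} with $\calF=\{1\}$ and $\calG=\fin$ presents $\underline{B}\Gamma_p$ as $B\Gamma_p$ with the classifying spaces of the (finitely many conjugacy classes of) maximal finite subgroups $M$ coned off; each such $M$ is a complement of $\Z^n$, so $\pi\colon\Gamma_p\to\Z/p^s$ restricts to an isomorphism on it and $BM\hookrightarrow B\Gamma_p$ is split injective up to homotopy. Since $\widetilde{ko}_{2j}(B\Z/p^s)=0$ for $p$ odd (as used in the even-dimensional case, the Atiyah--Hirzebruch spectral sequence of $B\Z/p^s$ being $p$-locally concentrated in odd total degree), the Mayer--Vietoris sequence of the pushout collapses $p$-locally to an isomorphism
\[
ko_{2r+1}(\underline{B}\Gamma_p)_{(p)}\;\cong\;\cok\!\Big(\textstyle\bigoplus_{M}\widetilde{ko}_{2r+1}(BM)_{(p)}\xrightarrow{\,f\,}ko_{2r+1}(B\Gamma_p)_{(p)}\Big),
\]
with $f$ induced by the inclusions $BM\hookrightarrow B\Gamma_p$. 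Each $\widetilde{ko}_*(BM)$ is torsion, so $\operatorname{im}f$ is a torsion subgroup, and it suffices to prove that $\operatorname{im}f$ \emph{equals} the torsion subgroup of $ko_{2r+1}(B\Gamma_p)_{(p)}$; then $\cok f\cong ko_{2r+1}(B\Gamma_p)_{(p)}/\mathrm{tors}$ is free (this is also forced by Corollary~\ref{lemma:integral-homology-odd}).

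To pin down that torsion subgroup I would run the Atiyah--Hirzebruch/Lyndon--Hochschild--Serre spectral sequence $E^2_{i,j}=H_i(\Z/p^s;ko_j(B\Z^n))_{(p)}$ of $\Z^n\to\Gamma_p\to\Z/p^s$, exactly as in the proof of Lemma~\ref{label:ko-even}: the $\Z_{(p)}[\Z/p^s]$-module isomorphism $ko_j(B\Z^n)_{(p)}\cong\bigoplus_l\Lambda^l\Z^n\otimes ko_{j-l}(\mathrm{pt})_{(p)}$ established there (building on Lemma~5.3 of \cite{DL13}), together with the $v_1$-periodicity/Dedekind-domain argument and \cite{LL12}, controls the Tate cohomology $\widehat H^*(\Z/p^s;ko_j(B\Z^n))$ and shows that the associated graded of $ko_{2r+1}(B\Gamma_p)_{(p)}$ has torsion only in positive Atiyah--Hirzebruch filtration, together with the torsion of the coinvariants $(ko_{2r+1}(B\Z^n))_{\Z/p^s}$. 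The step I expect to be the main obstacle is to check that all of this torsion factors through the finite subgroups, i.e. through $f$: here one uses that $\Z/p^s$ acts on $T^n=B\Z^n$ freely away from the finite singular set $(T^n)^{\Z/p}$, so that the complement contributes only free $\Z_{(p)}[\Z/p^s]$-summands (with vanishing Tate cohomology); restricting along the inclusion of the singular set then pushes every $p$-torsion class of $ko_{2r+1}(B\Gamma_p)_{(p)}$ onto a class supported at fixed points, each of which (by condition M) sits inside a maximal finite subgroup and hence in $\operatorname{im}f$. Assembling these steps shows $ko_{2r+1}(\underline{B}\Gamma_p)$, and therefore $ko_{2r+1}(\underline{B}\Gamma)$, is torsion free away from $2$.
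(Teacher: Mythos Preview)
The paper's proof is considerably more direct than your route. It does not pass through the pushout or through $B\Gamma_p$ at all; instead it runs the Atiyah--Hirzebruch spectral sequence for $ko_*(\underline{B}\Gamma)_{(p)}$ itself, with $E^2_{i,j}=H_i(\underline{B}\Gamma;ko_j(*)_{(p)})$. The key input is precisely Corollary~\ref{lemma:integral-homology-odd}, which you mention only parenthetically: the odd-degree integral homology of $\underline{B}\Gamma$ is finitely generated and torsion free. For $i+j$ odd one then has either $i$ odd (so $H_i$ is free and $ko_j(*)_{(p)}$ is $\Z_{(p)}$ or $0$), or $i$ even (so $j$ is odd and $ko_j(*)_{(p)}=0$ since $p$ is odd). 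Hence every $E^2_{i,j}$ on the odd antidiagonals is a free $\Z_{(p)}$-module; the differentials are rationally trivial between free $\Z_{(p)}$-modules and therefore vanish, and there are no extension problems. That is the entire argument.

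Your strategy instead reduces to $\Gamma_p$ via Lemma~\ref{p-local}, rewrites $ko_{2r+1}(\underline{B}\Gamma_p)_{(p)}$ as the cokernel of $f\colon\bigoplus_M\widetilde{ko}_{2r+1}(BM)_{(p)}\to ko_{2r+1}(B\Gamma_p)_{(p)}$, and then seeks to prove that $\operatorname{im}f$ is exactly the torsion subgroup of the target. But that last identification is essentially the lemma itself restated through the pushout, and the mechanism you sketch for it (pushing $p$-torsion classes onto the singular locus of the $\Z/p^s$-action on $T^n$) is where the real work hides: you would have to track the Atiyah--Hirzebruch filtration and show that classes supported near fixed points account for \emph{all} of the torsion, not merely some of it. You flag this as the main obstacle, and rightly so. The paper avoids this circle entirely by working directly on $\underline{B}\Gamma$, where the finite-subgroup contributions have already been coned off and the homological input from Corollary~\ref{lemma:integral-homology-odd} (ultimately resting on Lemma~\ref{lemma:sequence-even} from \cite{LL12}) is immediately usable.
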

\begin{proof}
Recall the  Atiyah-Hirzebruch spectral sequence converging  to  the  $ko$-homology  groups localized at  $p$, $ko_{*}(\underbar{B}\Gamma)_{(p)}$   with $E^{2}$-term 
$$ E^{2}_{i,j}= H_{i}(\underbar{B}\Gamma,{ ko_{j}(\ast)}_{(p)}). $$

The relevant  elements on the  $E^{2}$-term  of the   Atiyah-Hirzebruch spectral sequence for  the  computation  of  $ ko_{2r+1}(\underbar{B}\Gamma)$ are those  for  which  $i+j$ is odd. 
Let  us  distinguish the  following  cases: 
\begin{itemize}
\item Assume $i$ is odd, and  notice  that because of \ref{lemma:integral-homology-odd}, the homology  groups  $H_{i}(\underbar{B}\Gamma, \Z)$ are  torsionfree abelian.
Since $i+j$  is  odd, it  follows  that $j$ is  even. Let  us  analyze  first  the  case  where $j\equiv 0 \, {\rm or}\, \equiv 4$ modulo 8. In  both  cases, $ko_{j}(\ast)$ is  free  abelian  of  rank  one,  
and ${ko_{j}(\ast)}_{(p)}$ is a  free $\Z_{(p)}$-module  of  rank  one.  For $j\equiv 2$   modulo $8$, the  group  $ko_{j}(\ast)_{(p)}$ is zero, and   $ko_{j}(\ast)\cong 0 $ for  $j\equiv 6$ modulo 8. 
\item Assume  that $i$ is  even. Then $j$ is  odd. If $j\equiv 3,\, 5,\, \, {\rm or} \, 7$ modulo  $8$, then $ko_{j}(\ast)\cong 0$. If $j\equiv 1$ modulo $8$, then ${ko_{j}}_{(p)}=0$. 
\end{itemize}
In  either  case, we see  that  $E_{i,j}^{2}$ is  either  zero  or  a free $\Z_{(p)}$-module  of finite rank.  Moreover,  because  of  the  rational  triviality  of  the  differentials  of  the Atiyah-Hirzebruch  spectral sequence for  $ko_{2r+1}(\underbar{B}\Gamma)_{(p)}$,  the  spectral sequence  collapses  without differentials  and  extension problems,  converging  to   free $\Z_{(p)}$-modules. 

We analyze  now the Atiyah-Hirzebruch spectral sequence  for $ko_{2r+1}(\underbar{B}\Gamma)[\frac{1}{p}]$. By Lemma \ref{coinvariants}, these  groups  are  isomorphic  to  

$$\left(ko_{2r+1}(B\Z^{n})\left[\frac{1}{p}\right]\right)_{\Z/p^{s}},$$

By  (\ref{ko:p}), there are positive integers $r_i$, such that, these  groups  are  isomorphic  to  
$$ \underset{i}{\bigoplus} ko_{2r+1-i}(\ast)^{r_{i}}\left[\frac{1}{p}\right].
$$
If $i$ is even, the groups $ko_{2r+1-i}$ are   either  zero or  two-torsion, and  hence 
$ko_{2r+1-i}(\ast)^{r_{i}}\left[\frac{1}{p}\right]=0$.

If  $i$ is  odd,  the  sum $2r+1-i$ is even. Put  $i=2l_{i}+1$ for $l_{i}$ a  natural  number  or  zero.
if $r- l_{i}$ is  even, then  $2(r-l_{i})+2 \equiv \, 2\,{\rm mod} \, 4$, and  $ko_{2r+1-i}(*)\left[\frac{1}{p}\right]=0$.  If $r-l_{i}$ is odd, $2(r-l_i)+2$ is  divisible  by  four,  and   hence $ko_{2r+1-i}(*)\left[\frac{1}{p}\right]=ko_{2(r-l_{i})+2}(*)\left[\frac{1}{p}\right]= \Z\left[\frac{1}{p}\right]$.

In  either  case,  we  conclude  that  the  groups  are  zero  or  free $\Z\left[\frac{1}{p}\right]$-modules. 
\end{proof}
The  following  two  results  concern   computations  of  the  spin bordism  groups of  the  classifying  space, and  the  real $K$-theory  of  the  real group $C^{*}$-algebra of   the  finite  group $\Z/p^{s}$. 

\begin{lemma}\label{lemma:bordism-surjective-odd}
    For any $m$ odd, the map 
$$\widetilde{D}:\widetilde{\Omega}_*^{Spin}(B\dbZ/m)\to \widetilde{ko}_*(B\Z/m)$$is surjective.
\end{lemma}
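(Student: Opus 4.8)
The plan is to reduce the surjectivity of $\widetilde{D}$ on the reduced spin bordism of $B\mathbb{Z}/m$ to the prime-power case and then to known computations of D. Davis and L\"uck. First, since $m$ is odd, the reduced theories $\widetilde{\Omega}_*^{Spin}(B\mathbb{Z}/m)$ and $\widetilde{ko}_*(B\mathbb{Z}/m)$ are finite groups in each degree with no $2$-torsion, so it suffices to check surjectivity after localizing at each odd prime $p\mid m$. Writing $m=p^s m'$ with $(p,m')=1$, the transfer argument (or the splitting $B\mathbb{Z}/m\simeq_{(p)} B\mathbb{Z}/p^s$ at $p$, coming from the idempotent $1/m'$ acting on the $p$-local homology) identifies the $p$-localization of $\widetilde{D}$ for $B\mathbb{Z}/m$ with the $p$-localization of $\widetilde{D}$ for $B\mathbb{Z}/p^s$. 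Hence I would reduce to proving that $\widetilde{D}\colon\widetilde{\Omega}_*^{Spin}(B\mathbb{Z}/p^s)\to\widetilde{ko}_*(B\mathbb{Z}/p^s)$ is surjective for every odd prime $p$.

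For the prime-power case I would invoke the ko-orientation of spin bordism together with the computations of $\widetilde{ko}_*(B\mathbb{Z}/p^s)$ for $p$ odd that appear in the work of Davis--L\"uck (and ultimately go back to Bruner--Greenlees and Botvinnik--Gilkey--Stolz). The key structural input is that for $p$ odd, connective real $K$-homology $\widetilde{ko}_*(B\mathbb{Z}/p^s)$ agrees (away from $2$, which is automatic here) with connective complex $K$-homology $\widetilde{ku}_*(B\mathbb{Z}/p^s)$ up to the usual regrading, and the latter is generated in low degrees by the classes of the standard lens spaces. More precisely, the Atiyah-Hirzebruch spectral sequence for $\widetilde{ko}_*(B\mathbb{Z}/p^s)$ collapses $p$-locally and the associated graded is built from $\widetilde{H}_*(B\mathbb{Z}/p^s;\mathbb{Z})$ tensored with the torsion-free part of $ko_*(\ast)_{(p)}$; the lens-space manifolds $L^{2k-1}(p^s)$, which are spin for $p$ odd, realize generators of $\widetilde{\Omega}_*^{Spin}(B\mathbb{Z}/p^s)$ in the relevant filtrations, and their images under $\widetilde{D}$ hit the corresponding generators of $\widetilde{ko}_*$. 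Thus $\widetilde{D}$ is surjective filtration-wise, hence surjective.

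The step I expect to be the main obstacle is the last one: verifying that the images of the lens-space bordism classes actually generate $\widetilde{ko}_*(B\mathbb{Z}/p^s)$, rather than a proper subgroup, in every degree. This is a matching of two Atiyah-Hirzebruch spectral sequences — the one for $\widetilde{\Omega}^{Spin}_*$ and the one for $\widetilde{ko}_*$ — under the orientation $\widetilde{D}$, and one must control the $d_2$-differentials in the spin-bordism spectral sequence (governed by $Sq^2$ composed with reduction) and check that on the $E_\infty$-page the induced map onto the $ko$-side is onto. For $p$ odd the $2$-primary subtleties that make the even case delicate disappear, and the relevant differentials in the bordism spectral sequence are the same as those for the $ko$-spectral sequence; so the map on $E_\infty$-pages is the identity on the torsion-free building blocks and surjectivity follows. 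I would cite the explicit computation of $\widetilde{ko}_*(B\mathbb{Z}/p^s)_{(p)}$ and its generators (as in \cite{DL13} and the references therein) to close this gap rather than redo it, and note that the spin structures on lens spaces and the multiplicativity of the $\widehat{A}$/$ko$-orientation give the required generators on the bordism side.
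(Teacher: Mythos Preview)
Your reduction to the prime-power case and the eventual recourse to comparing the two Atiyah--Hirzebruch spectral sequences is exactly the paper's strategy, so the overall shape is right. The detour through lens spaces, however, is both unnecessary and the source of the gap you yourself flag: to generate all of $\widetilde{ko}_*(B\mathbb{Z}/p^s)$ geometrically you would need not just lens spaces but products of lens spaces with arbitrary closed spin manifolds, and verifying that these hit every class is more bookkeeping than the problem requires.

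The paper bypasses this completely. After localizing at an odd prime $p$, the coefficient groups $(\Omega_j^{\mathrm{Spin}})_{(p)}$ and $(ko_j)_{(p)}$ are torsion-free and concentrated in degrees $j\equiv 0\pmod 4$, while $\widetilde{H}_i(B\mathbb{Z}/p^s;\mathbb{Z})$ is concentrated in odd degrees $i$. A parity check on the bidegree of $d_r$ then shows that no differential in either reduced Atiyah--Hirzebruch spectral sequence can have both nonzero source and nonzero target, so both collapse at $E^2$ with
\[
E^\infty_{i,j}\cong \widetilde{H}_i(B\mathbb{Z}/p^s)\otimes (\Omega_j^{\mathrm{Spin}})_{(p)}
\quad\text{and}\quad
E^\infty_{i,j}\cong \widetilde{H}_i(B\mathbb{Z}/p^s)\otimes (ko_j)_{(p)}.
\]
Surjectivity of $\widetilde{D}_{(p)}$ then reduces to surjectivity of the coefficient map $D_{(p)}\colon (\Omega_j^{\mathrm{Spin}})_{(p)}\to (ko_j)_{(p)}$ in degrees divisible by four, which is immediate. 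No explicit manifold representatives, and no appeal to outside computations of $\widetilde{ko}_*(B\mathbb{Z}/p^s)$, are needed. Your phrase ``the map on $E_\infty$-pages is the identity on the torsion-free building blocks'' is pointing at this, but note that the $E^\infty$-terms themselves are all $p$-torsion; what is torsion-free is the coefficient factor, and the induced map on $E^\infty$ is $\mathrm{id}\otimes D_{(p)}$, which is surjective because $D_{(p)}$ is.
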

\begin{proof}
By Lemma \ref{p-local} (i), it is enough to prove that $$\widetilde{D}:\widetilde{\Omega}^{Spin}_{*}(B\Z/p^s)\to \widetilde{ko}_*(B\Z/p^s)$$ is surjective for any odd prime $p$.

Let  $ M $ be  a $\mathbb{Z}/p^s$-module.  By  the  standard  resolution  of  $\mathbb{Z}$ as  a  trivial $\mathbb{Z}[\mathbb{Z}/p^s]$-  module, for any  $i\geq 1$,    the  localization  of  the  homology  groups  with  coefficients  in $M$, 
 $$ H_{i}(\mathbb{Z}/p^s , M)\left[\frac{1}{p}\right]= 0$$
holds.

It  follows  that for $i\geq 1$  the   maps 
\begin{equation}\label{iso:localization}H_{i}(\mathbb{Z}/p^s, M)\to H_{i}(\mathbb{Z}/p^s, M)_{(p)} \to H_{i}(B\mathbb{Z}/ p^s; M_{(p)})\end{equation}

are  all  isomorphisms.  Consider the Atiyah-Hirzebruch spectral sequences converging to $\Omega_*^{Spin}(B\Z/p^s)$, $\Omega_*^{Spin}(B\Z/p^s)_{(p)}$, $ko_*(B\Z/p^s)$ and $ko_*(B\Z/p^s)_{(p)}$. By  the  comparison lemma for spectral sequences \cite[Theorem 5.2.12]{Weibel} and isomorphism (\ref{iso:localization}), we have a commutative diagram

$$\xymatrix{\widetilde{\Omega}_m(B\Z/p^s)\ar[d]^{\cong}\ar[r]^{\widetilde{D}}&\widetilde{ko}_m(B\Z/p^s)\ar[d]^{\cong}\\\widetilde{\Omega}_m(B\Z/p^s)_{(p)}\ar[r]^{\widetilde{D}_{(p)}}&\widetilde{ko}_m(B\Z/p^s)_{(p)}}$$


Then it is enough to prove the surjectivity of $\widetilde{D}_{(p)}$



The  Atiyah-Hirzebruch spectral sequence  for   computing $p$-local Spin  bordism  and  $ko$-homology collapse  at  the  $E^{2}$ term,  yielding  isomorphisms 
$$E_{i,j}^{\infty}= \tilde{H}_{i}(\mathbb{Z}/p^s) \otimes \left(\Omega_{j}^{\rm Spin}\right) _{(p)}$$

$$ E_{i,j}^{\infty}= \tilde{H}_{i}(\mathbb{Z}/p^s)\otimes{ \left({\rm ko}_{j}\right)}_{(p)}$$

Taking  a  look  at the  map  on  the $(p)$-localized coefficients 
$$ D_{(p)}: \left(\Omega_{j}^{\rm Spin}\right)_{(p)}\to \left({\rm ko}_{j}\right)_{(p)},$$
which  are non -zero  only  for  $j$ a  multiple  of  four,  we  see that  the map  is  surjective  at  the level  of  coefficients,  and  thus  surjective  at  the  $E^{\infty}$- term.

\end{proof}

We  now  recall  the  following  consequence  of Theorem 9.4 in page 415  of \cite{DL13}. 
\begin{lemma}\label{lemma:realktheory-finitecyclic}
Let $p$ be an  odd prime number. The real  $K$-theory  of  the  real group $C^{*}$-algebra  for  the cyclic  group $\Z/p^{s}$ is  as  follows:

\begin{center}
\begin{tabular}{||c|c||}
\hline
      $KO_{0}(\mathbb{R}[\Z/p^{s}])$  & $\Z^{1+\frac{p^s-1}{2}}$   \\ \hline 
      $KO_{1}(\mathbb{R}[\Z/p^{s}])$ &   $\Z/2$ \\  \hline
      $KO_{2}(\mathbb{R}[\Z/p^{s}])$ &   $\Z/2\oplus\Z^{\frac{p^{s}-1}{2}}$ \\  \hline
      $KO_{3}(\mathbb{R}[\Z/p^{s}])$ &   $0$ \\  \hline
      $KO_{4} (\mathbb{R}[\Z/p^{s}])$&    $\Z^{1+\frac{p^{s}-1}{2}}$\\  \hline
      $KO_{5} (\mathbb{R}[\Z/p^{s}])$&    $0$\\  \hline
      $KO_{6} (\mathbb{R}[\Z/p^{s}])$&    $\Z^{\frac{p^{s}-1}{2}}$\\  \hline
      $KO_{7}(\mathbb{R}[\Z/p^{s}])$ &    $0$\\
      \hline
\end{tabular}
\end{center}
\end{lemma}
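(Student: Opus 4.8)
The plan is to obtain the table by combining the Wedderburn decomposition of the real group algebra with additivity of topological $K$-theory; concretely, I would specialize Theorem 9.4 of \cite{DL13} to $G=\Z/p^{s}$.

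\emph{Step 1: decompose the algebra.} I would identify $\mathbb{R}[\Z/p^{s}]$ with $\mathbb{R}[x]/(x^{p^{s}}-1)=\prod_{j=0}^{s}\mathbb{R}[x]/\Phi_{p^{j}}(x)$, where $\Phi_{d}$ denotes the $d$-th cyclotomic polynomial. The factor $j=0$ is $\mathbb{R}[x]/(x-1)\cong\mathbb{R}$. For $j\geq 1$, since $p^{j}>2$, the polynomial $\Phi_{p^{j}}$ splits over $\mathbb{R}$ into $\varphi(p^{j})/2$ distinct irreducible quadratics, each quotient being $\cong\mathbb{C}$, so $\mathbb{R}[x]/\Phi_{p^{j}}(x)\cong\mathbb{C}^{\varphi(p^{j})/2}$. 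As $\sum_{j=1}^{s}\varphi(p^{j})=p^{s}-1$, this gives an isomorphism of real $C^{*}$-algebras
$$\mathbb{R}[\Z/p^{s}]\;\cong\;\mathbb{R}\times\mathbb{C}^{\frac{p^{s}-1}{2}}.$$
No quaternionic or larger matrix summand appears; this is special to the cyclic, odd-order case --- equivalently, $\Z/p^{s}$ has no element of order $2$, so every nontrivial complex character has Frobenius--Schur indicator $0$ and pairs with its conjugate.

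\emph{Step 2: pass to $K$-theory and read off the answer.} Since topological $K$-theory of real $C^{*}$-algebras sends finite products to direct sums,
$$KO_{n}(\mathbb{R}[\Z/p^{s}])\;\cong\;KO_{n}(\mathbb{R})\;\oplus\;KO_{n}(\mathbb{C})^{\frac{p^{s}-1}{2}}\qquad(n\in\Z).$$
Then I would insert the classical coefficients: by Bott periodicity $KO_{n}(\mathbb{R})$ runs through $\Z,\Z/2,\Z/2,0,\Z,0,0,0$ for $n\equiv 0,1,\dots,7\pmod 8$, and the real $K$-theory of $\mathbb{C}$ is complex $K$-theory, $KO_{n}(\mathbb{C})\cong KU_{n}(\mathrm{pt})$, which is $\Z$ for $n$ even and $0$ for $n$ odd. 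Substituting reproduces the eight entries of the table verbatim (for instance $KO_{0}\cong\Z^{1+(p^{s}-1)/2}$, $KO_{2}\cong\Z/2\oplus\Z^{(p^{s}-1)/2}$, $KO_{6}\cong\Z^{(p^{s}-1)/2}$, and the odd degrees give $\Z/2,0,0,0$).

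\emph{Main obstacle.} There is no analytic content; the one point needing care is Step 1, namely checking that for odd $p^{s}$ the only simple factors are one copy of $\mathbb{R}$ and exactly $(p^{s}-1)/2$ copies of $\mathbb{C}$. This is the Frobenius--Schur (equivalently cyclotomic-factorization) bookkeeping, and it is the place where the oddness of $p$ is essential. Everything else is substitution of the well-known homotopy groups of real and complex topological $K$-theory, and the whole statement is subsumed by Theorem 9.4 of \cite{DL13}.
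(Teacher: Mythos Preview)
Your argument is correct and is precisely the approach the paper invokes: the paper states the lemma without proof as a consequence of Theorem~9.4 in \cite{DL13}, and your Wedderburn decomposition $\mathbb{R}[\Z/p^{s}]\cong\mathbb{R}\times\mathbb{C}^{(p^{s}-1)/2}$ together with the standard $KO$-coefficients is exactly the specialization of that theorem to the odd cyclic case. There is nothing to add.
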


\begin{theorem}\label{theo:main-impar}
Let $M^{2r+1}$ be a $2r+1$-dimensional smooth spin manifold, where $r\geq 2$ is  odd, and  with fundamental group isomorphic to $\Gamma$.  Denote  by $f_{M}: M\to B\Gamma $ the  classifying map for  the  fundamental group. Assume  that $\alpha(M) =0$. Then $M$ admits a  metric of  positive scalar curvature. 
\end{theorem}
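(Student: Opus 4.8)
The plan is to mirror the even case through the diagram of Theorem~\ref{diagram:main}, and then to supply the geometric input that is genuinely needed in odd degrees, where the reduced connective $KO$-homology of the finite subgroups no longer vanishes. Write $D[f_M]\in ko_{2r+1}(B\Gamma)$ for the $ko$-fundamental class of $(M,f_M)$. Applying the diagram of Theorem~\ref{diagram:main} in degree $2r+1$, the hypothesis $\alpha(M)=0$ and commutativity give $p_{\underline B\Gamma}(\beta(D[f_M]))=0$, so $\beta(D[f_M])\in\ker p_{\underline B\Gamma}$; by Lemma~\ref{periodicity} this class is $m$-torsion. Since $m$ is odd, every prime dividing $m$ is distinct from $2$, so Lemma~\ref{lemma:no-ptorsion-odd} shows $ko_{2r+1}(\underline B\Gamma)$ has no $m$-torsion, and therefore $\beta(D[f_M])=0$. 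By exactness of the top row of the diagram, $D[f_M]$ lies in the image of the map $\gamma\colon\bigoplus_{(N)\in\mathcal N}\widetilde{ko}_{2r+1}(BN)\to ko_{2r+1}(B\Gamma)$ induced by the inclusions $BN\hookrightarrow B\Gamma$; fix $x=(x_N)_{(N)}$ with $\gamma(x)=D[f_M]$.

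The next step is to realize each $x_N$ by a manifold with positive scalar curvature. Each maximal finite subgroup $N$ meets $\Z^n$ trivially, hence embeds in $\Z/m$ and is cyclic of odd order $d\mid m$; by Lemma~\ref{lemma:bordism-surjective-odd} the map $\widetilde D\colon\widetilde\Omega^{\mathrm{Spin}}_{2r+1}(BN)\to\widetilde{ko}_{2r+1}(BN)$ is onto. I would strengthen this to the observation that every class of $\widetilde\Omega^{\mathrm{Spin}}_{2r+1}(BN)$ is represented by a closed spin manifold admitting positive scalar curvature: by the collapse of the Atiyah--Hirzebruch spectral sequence proved inside the proof of Lemma~\ref{lemma:bordism-surjective-odd}, $\widetilde\Omega^{\mathrm{Spin}}_{2r+1}(BN)$ is generated over $\Omega^{\mathrm{Spin}}_*$ by products $[L]\times[V]$ with $L$ a lens space with fundamental group $N$ and $V$ a closed spin manifold, and such a product carries positive scalar curvature because the round lens space does and positivity of scalar curvature passes to products. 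Combining this with the surjectivity of $\widetilde D$, choose for each $N$ a positive scalar curvature spin manifold over $BN$ representing a class $y_N$ with $\widetilde D(y_N)=x_N$; pushing forward along $BN\hookrightarrow B\Gamma$ and summing produces $z\in\Omega^{\mathrm{Spin}}_{2r+1}(B\Gamma)$, itself represented by a positive scalar curvature manifold, with $D(z)=\gamma(x)=D[f_M]$ by naturality of the $ko$-orientation.

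It then remains to run the Stolz-type endgame. We have $[M,f_M]-z\in\ker\bigl(D\colon\Omega^{\mathrm{Spin}}_{2r+1}(B\Gamma)\to ko_{2r+1}(B\Gamma)\bigr)$, and by Stolz's theorem (in the form packaged as Prop.~12.1 of \cite{DL13}) this kernel is represented by positive scalar curvature manifolds; hence so is $[M,f_M]=([M,f_M]-z)+z$, and since $n=2r+1\ge5$ and $\pi_1(M)=\Gamma$, the Gromov--Lawson surgery procedure transfers the metric to $M$. Here the parity of $r$ and the computation of Lemma~\ref{lemma:realktheory-finitecyclic} enter: through the L\"uck--Weiermann pushout (Theorem~\ref{LW}) they identify $KO_{2r+1}(C_r^*(\Gamma;\dbR))$ --- in odd degrees $\equiv 3,7 \pmod 8$ the finite subgroups contribute nothing --- which certifies that the hypotheses of Prop.~12.1 of \cite{DL13} are met for $\Gamma$.

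I expect the main obstacle to be exactly this last step: identifying $\ker D$ with the classes realizable by positive scalar curvature and verifying the hypotheses of the Stolz-type realization theorem in the present setting. A secondary point requiring care is the realizability step itself, namely lifting the $E^\infty$-splitting of the Atiyah--Hirzebruch spectral sequence for $\widetilde\Omega^{\mathrm{Spin}}_*(BN)$ to an honest presentation of $\widetilde\Omega^{\mathrm{Spin}}_{2r+1}(BN)$ by products with a lens-space factor, so that each additive generator visibly carries positive scalar curvature. By contrast the connective-$K$-homological inputs (Lemmas~\ref{periodicity},~\ref{lemma:no-ptorsion-odd},~\ref{lemma:bordism-surjective-odd}) are routine once the spectral-sequence and coinvariants machinery of Section~\ref{sec:connective} is in place.
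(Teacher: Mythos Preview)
Your overall architecture matches the paper's: use Lemmas~\ref{periodicity} and~\ref{lemma:no-ptorsion-odd} to push $D[f_M]$ into the image of $\bigoplus_N\widetilde{ko}_{2r+1}(BN)$, lift along $\widetilde D$ via Lemma~\ref{lemma:bordism-surjective-odd}, exhibit PSC representatives over each $BN$, and finish with Stolz plus Gromov--Lawson surgery. The genuine difference is in how you obtain positive scalar curvature over the finite subgroups. The paper observes from Lemma~\ref{lemma:realktheory-finitecyclic} that $\widetilde{KO}_{2r+1}(\dbR[N])=0$ and then invokes the Gromov--Lawson--Rosenberg conjecture for groups with periodic cohomology \cite{BGS} to upgrade each $[M_N,F_N]$ to a PSC representative. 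You instead argue directly that $\widetilde\Omega^{\mathrm{Spin}}_{2r+1}(BN)$ is generated by products $[L]\times[V]$ with $L$ a lens space, which carry PSC automatically. Your route is more elementary in that it avoids the deep input from \cite{BGS}; the price is the filtration bookkeeping you flag (showing these products really generate through the $E^\infty$ terms), which is manageable since $\widetilde H_*(BN)$ is concentrated in odd degrees and the lens spaces hit generators there.

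One correction: your final paragraph invoking Lemma~\ref{lemma:realktheory-finitecyclic} and the parity of $r$ as ``certifying the hypotheses of Prop.~12.1 of \cite{DL13}'' is misplaced. That proposition needs only $n\geq 5$ and $D[f_M]$ in the image of PSC classes, which you have already established; in your argument Lemma~\ref{lemma:realktheory-finitecyclic} is not used at all. In the paper it enters precisely because the PSC representatives are produced via \cite{BGS}, which requires $\alpha(M_N)=0$ in $\widetilde{KO}_{2r+1}(\dbR[N])$---and that group vanishes by the table. Since you bypass \cite{BGS}, you can simply drop that sentence.
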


\begin{proof}
    Consider  the diagram  from Theorem \ref{diagram:main} together  with  the additional left column given by $A\circ p_{BN}$.

$$\xymatrix{\bigoplus_{(N)\in\mathcal{N}}\widetilde{ko}_{2r+1}(BN)\ar[r]^-{\varphi_{2r+1}} \ar[d]^-{A\circ p_{BN}}&ko_{2r+1}(B\Gamma)\ar[r]^{\beta}\ar[d]^{A\circ p_{B\Gamma}}&ko_{2r+1}(\underline{B}\Gamma)\ar[d]^{p_{\underline{B}\Gamma}}\\ \underset{N\in \mathcal{(N})}{\bigoplus}\widetilde{KO}_{2r+1}(\mathbb{R}[\Z/p^{s}])& KO_{2r+1}(C_r^*(\Gamma;\dbR))\ar[r]&KO_{2r+1}(\underline{B}\Gamma)}$$

Recall  that  from  \ref{lemma:no-ptorsion-odd}, the  group $ko_{2r+1}(\underbar{B}\Gamma)$ does  not  contain $m$-torsion,  and  from  \ref{periodicity}, $\ker p_{\underbar{B}\Gamma}$ only  consists  of  $m$-torsion. 
It follows  that  $\beta(D_{[F_{M}]})=0$. By  the  exactness of  the upper  line, we  can  find $X_{N}\in \widetilde{k}o_{2r+1}(BN)$ in the  preimage  under  the  map  $\varphi_{2r+1}$. 
By  the  surjectivity  lemma \ref{lemma:bordism-surjective-odd}, we  can  find classes $[M_{N}, F_{N}]\in \Omega_{2r+1}^{Spin}(BN)$ such that  $D[M_{N}\overset{F_{N}}{\to} BN]= X_{N}$. By  surgery,  we  can  assume  that $F_{M}$ is 2-connected. 
\

Now, recall  that  by   the  computation of  the  real $K$-theory  of  the  group $C^{*}$ algebra  of  $\Z/p^{s}$ \ref{lemma:realktheory-finitecyclic}, the reduced $KO$-theory  groups  are zero in odd degrees.  By the proof  of  the  Gromov-Lawson -Rosenberg Conjecture  for  groups with  periodic cohomology \cite{BGS},  the classes $[M_{N}, F_{N}]$ admit  representatives $[M_{N}^{+}, F_{M}]$, where  $M_{N}^{+}$  has positive  scalar  curvature.    

We  consider  now the  class 
$$ D[F_{M}:M\to B\Gamma ]= D[ \underset{N\in \mathcal{N}}{\sqcup} M_{N}^{+}\longrightarrow \underset{N\in \mathcal{N}}{\sqcup} BN \longrightarrow B\Gamma  ]$$
  and  notice  that  it  admits  a  representative  of positive  scalar  curvature. This  finishes the  proof  of  theorem \ref{theo:main-impar}.

\end{proof}

\section{Construction  of  Counterexamples}

In this section, we will give a condition on the action of $\Z/m$ on $\Z^n$ that implies that the group $\Z^n\rtimes\Z/m$ is a counterexample for the GLR conjecture.The  assumption  with  be  that  $m$ is odd and free of squares to use results from \cite{SV}.

  Let $p$ be a prime that divides $m$, then by Prop. 4.5 in \cite{SV} there is $\Z/m$-submodule $N\subseteq \Z^n$ of finite index such that this index is coprime with $p$ and such that there is a decomposition of $\Z/m$-modules $$N\cong \Z^r\oplus (\Z[\Z/p])^s\oplus I^t.$$
 Where $\Z$ has the trivial $\Z/p$-action, $I\subseteq \Z[\Z/p]$ is the augmentation ideal and both $I$ and $\Z[\Z/p]$ are endowed with the canonical $\Z/p$-action and moreover $\Z^r$, $(\Z[\Z/p])^s$ and $I^t$ are itself $\Z/m$-modules. We say the $N$ is a $\Z/p$-module of type $(r,s,t)$. By Lemma 5.3 in \cite{SV} we can suppose that $\Z^n$ is a $\Z/m$-module is of type $(r,s,t)$. We have the following result.

\begin{theorem}\label{theo:counterexamples}Suppose $m$ is square-free. Let $\Z^n$ be a $\Z/m$-module, and suppose that there exists a prime $p\mid m$ such that if we consider the $(r,s,t)$ decomposition of $M$ viewed as a $\Z/p$-module, where $r\geq 4$, and $s+t\geq1$ then $\Z^n\rtimes\Z/m$ is a counter-example for the GLR conjecture.
\end{theorem}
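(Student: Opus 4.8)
The argument follows the method of T.~Schick \cite{schick}, in the form developed by Davis--Pearson \cite{davispearson} and used in \cite{DSS}: I will produce a closed spin manifold $M$ of dimension $N\geq 5$ with $\pi_1(M)\cong\Z^n\rtimes\Z/m$ and classifying map $u_M\colon M\to B\Gamma$ for which the Rosenberg index $\alpha(M)=A\circ p_{B\Gamma}(D[u_M])$ vanishes while $M$ carries no metric of positive scalar curvature, the obstruction to the latter being the torsion refinement of the index invariant used by Schick rather than $\alpha$ itself. The two inequalities play opposite roles for the class $D[u_M]$: $s+t\geq 1$ will be used to place the relevant torsion class in the kernel of the assembly map, and $r\geq 4$ will be used to keep the geometric obstruction nonzero after passing to codimension-one submanifolds.

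\emph{Step 1: $p$-local data and the relevant classes.} Since $m$ is square-free, $\Z/p$ is the $p$-Sylow subgroup of $\Z/m$, and by the computations of \cite{SV} the $p$-primary part of $H^*(B\Gamma)$ is governed by the decomposition $\Z^n\cong\Z^r\oplus(\Z[\Z/p])^s\oplus I^t$ of $\Z^n$ as a $\Z/m$-module, with $\Z/m$ acting trivially on $\Z^r$ and through $\Z/m\twoheadrightarrow\Z/p$ on the other summands. An embedding $\Z^4\hookrightarrow\Z^r$ together with the splitting $\Z/p\hookrightarrow\Z/m\hookrightarrow\Gamma$ yields a homomorphism $\Z^4\times\Z/p\to\Gamma$ and a map $B\Z^4\times B\Z/p\to B\Gamma$. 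From $B\Z^4$ we obtain $t_1,\dots,t_4\in H^1(B\Gamma;\Z)$ with $t_1t_2t_3t_4\neq 0$ in $H^4(B\Gamma;\Z)$; the hypothesis $s+t\geq 1$ makes the action of $\Z/p$ on $\Z^n$ nontrivial, so $\Z/p$ embeds in $\Gamma$ with proper normalizer, and --- reading off the $p$-local cohomology from \cite{SV} --- the $p$-torsion classes coming from $H^*(B\Z/p)$ pair with $t_1t_2t_3t_4$ to produce the torsion class $\rho$ that Schick's construction requires in the appropriate degree, chosen so that $N\geq 5$.

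\emph{Step 2: the manifold and the vanishing of $\alpha(M)$.} Let $\sigma\in\widetilde{ko}_*(B\Z/p)$ be the generator corresponding to $\rho$; the summands of $\widetilde{ko}_*(B\Z/p)$ at hand are $p$-torsion, and by \cref{lemma:bordism-surjective-odd} (with the odd prime $p$ in place of $m$) $\sigma$ is the image under $\widetilde D$ of a class $[L,f]\in\widetilde\Omega^{\mathrm{Spin}}_*(B\Z/p)$. Since $D$ is multiplicative and $[B\Z^4]_{ko}=D[B\Z^4,\mathrm{id}]$, the element $z\in ko_N(B\Gamma)$ obtained by pushing $[B\Z^4]_{ko}\times\sigma$ to $B\Gamma$ equals $D$ of the image of $[B\Z^4,\mathrm{id}]\times[L,f]$ in $\Omega^{\mathrm{Spin}}_N(B\Gamma)$; surgery below the middle dimension on this bordism class produces a spin $M$ with $u_M$ $2$-connected, hence $\pi_1(M)\cong\Gamma$ and $D[u_M]=z$. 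Now $A\circ p_{B\Gamma}(z)$ is the image under the change-of-group homomorphism of $A_{\Z^4}([B\Z^4]_{KO})\times A_{\Z/p}\bigl(p_{B\Z/p}(\sigma)\bigr)$, and the second factor vanishes: in the relevant degrees it is a homomorphism from a $p$-group into $KO_*(C_r^*(\Z/p;\dbR))$, which by \cref{lemma:realktheory-finitecyclic} has no $p$-torsion because $p$ is odd. Hence $\alpha(M)=0$.

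\emph{Step 3: absence of positive scalar curvature; the main obstacle.} Suppose $M$ admitted a PSC metric. By the method of \cite{schick} --- iterated codimension-one (Schoen--Yau/Gromov--Lawson) descent along the Poincar\'e duals of the classes $u_M^*(t_i)$, combined with the torsion refinement of the index obstruction --- one passes to a lower-dimensional spin manifold still carrying PSC whose classifying data detects $t_1t_2t_3t_4$ together with the surviving $p$-torsion, and Schick's computation then exhibits a nonzero index-type invariant there, contradicting Rosenberg's theorem. The crux, and the step I expect to require the most work, is precisely this discrepancy: the assembly map for $\Gamma$ annihilates $z$ (Step 2), while the invariant surviving the codimension-one reduction does not. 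Establishing it amounts to extracting from the explicit description of $H^*(\Z^n\rtimes\Z/m)$ in \cite{SV} that $r\geq 4$ supplies enough degree-one classes for the reduction to land where the refined obstruction is visible, and that $s+t\geq 1$ places the torsion factor in the kernel of assembly --- and then keeping track of spin structures and orientations through the surgeries realizing $\pi_1(M)\cong\Gamma$ and through the hypersurface cuts. Once this is done, $M$ is the desired counterexample, and since only the $p$-local data of $\Gamma$ entered, the conclusion holds for every $\Z^n\rtimes\Z/m$ satisfying the hypothesis.
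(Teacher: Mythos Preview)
Your approach is the paper's: both feed the group $\Gamma$ into Schick's template from \cite{schick}. The paper's execution is leaner because it isolates exactly the group-cohomological input that Schick's Example~2.2 needs and verifies only that. Concretely, since $\Z/m$ acts trivially on the summand $\Z^r$ in the $(r,s,t)$-decomposition, there is an inclusion $\iota\colon \Z^r\times\Z/m\hookrightarrow\Gamma$; the paper takes $a_1,\dots,a_4\in H^1(\Gamma)$ pulled from $H^1(\Z^r\times\Z/m)$, a $p$-torsion element $\hat y\in H_1(\Z^r\times\Z/m)$, sets $w=\iota_*(\hat x_1\times\cdots\times\hat x_4\times\hat y)\in H_5(\Gamma)$, and shows $a_1\cap(a_2\cap(a_3\cap w))\neq 0$. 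By naturality of cap products this reduces to the injectivity of $\iota_*$ on $H_5$, which is read off from the collapse of the Lyndon--Hochschild--Serre spectral sequence \cite{AGPP} together with the K\"unneth formula applied to the $(r,s,t)$-splitting. After that one simply cites Schick. Your Steps~2 and~3 are an unpacking of what is already contained in that citation; in particular the self-declared ``hard part'' of Step~3 disappears once the cap-product nonvanishing is in hand.

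Two places where your write-up drifts. First, your assignment of roles to the hypotheses is off: the condition $s+t\geq 1$ is \emph{not} what makes $\alpha(M)=0$. Your own Step~2 derives $\alpha(M)=0$ from the fact that the class is $p$-torsion while $KO_*(\dbR[\Z/p])$ has no odd torsion (\cref{lemma:realktheory-finitecyclic}), and nothing in that argument invokes $s+t\geq 1$; indeed it already runs verbatim for Schick's original $\Z^4\times\Z/p$, where $s=t=0$. The remark that $s+t\geq 1$ forces $\Z/p$ to have proper normalizer is true but plays no role in the obstruction. Second, your Step~1 asserts $t_1t_2t_3t_4\neq 0$ in $H^4(B\Gamma)$ and the survival of the paired torsion class by appeal to \cite{SV} without saying why these classes, which are manufactured in the subgroup $\Z^4\times\Z/p$, remain nonzero after pushing forward to $\Gamma$. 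That survival is precisely the content the paper supplies via the injectivity of $\iota_*$, and it is the one genuinely new verification beyond Schick's template; you should make it explicit rather than leave it buried in a citation.
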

\begin{proof}
 Notice that, by Corollary. 4.2 in \cite{AGPP} the Lyndon-Hoschild-Serre spectral sequence for (co-)homology associated to the extension (\ref{extension}) collapses. In particular, 
 $H^1(\Gamma)$ contains as a subgroup $H^1(\Z^r\times \Z/m)$, then let $a_1\ldots,a_4$ be some generators of the torsion-free part of $H^1(\Z^n\times\Z/m)$, viewed as elements in $H^1(\Gamma)$. For each $a_i$, we have the dual elements $\widehat{x}_i\in H_1(\Z^n\times\Z/m)$, let $x_i=\iota_*(\widehat{x}_i)$, where $\iota:\Z^r\times\Z/m\to\Gamma$ is the inclusion

 and let $\widehat{y}$ be an element of $p$-torsion in $H_1(\Z^r\times\Z/m)$. Now we follow the argument in \cite{schick}. Let $w=\iota_*(\widehat{x}_1\times\ldots \widehat{x}_4\times \widehat{y})\in H_5(\Gamma)$. Let us prove that $a_1\cap(a_2\cap(a_3\cap  w))\neq 0$.

 First note that by the Kunneth formula applied to the decomposition as $(r,s,t)$-modules, the map $\iota_*:H_5(\Z^r\times\Z/m)\to H_5(\Gamma)$ is injective. On the other hand, by the naturality of the cap product we have $$\iota_*(a_3\cap (\widehat{x}_1\times\ldots \widehat{x}_4\times \widehat{y}))=\iota_*(\iota^*(a_3)\cap (\widehat{x}_1\times\ldots \widehat{x}_4\times \widehat{y}))=a_3\cap w.$$But $a_3\cap (\widehat{x}_1\times\ldots \widehat{x}_4\times \widehat{y})\neq0,$ and similarly  $a_1\cap(a_2\cap(a_3\cap ( w)))\neq 0$ and now the same argument in Example 2.2 in \cite{schick} applies. Then the group $\Gamma$ is a counterexample for GLR conjecture.
\end{proof}
\bibliographystyle{alpha} 
\bibliography{mybib}
\end{document}